\documentclass[12pt,reqno]{article}

\usepackage{amsmath,amsthm,amssymb,amsfonts}
\usepackage{xcolor}
\usepackage{graphicx}
\usepackage{verbatim}
\usepackage[inline]{enumitem}

\usepackage{etoolbox}
\usepackage{authblk}

\usepackage{hyperref}

\theoremstyle{plain}
\newtheorem{theorem}{Theorem}
\newtheorem{corollary}[theorem]{Corollary}
\newtheorem{lemma}[theorem]{Lemma}
\newtheorem{proposition}[theorem]{Proposition}

\theoremstyle{definition}
\newtheorem{definition}{Definition}
\newtheorem{example}{Example}
\newcommand{\setEnvironmentQed}[2]{
  \AtBeginEnvironment{#1}{%
    \pushQED{\qed}\renewcommand{\qedsymbol}{#2}%
  }
  \AtEndEnvironment{#1}{\popQED}
}
\setEnvironmentQed{example}{\ensuremath{\blacksquare}}

\theoremstyle{remark}

\newtheorem*{remark*}{Remark}

\ExplSyntaxOn
\NewDocumentCommand{\iqtest}{m}{
\seq_set_split:Nnn\l_tmpa_seq{,}{#1}
\begin{enumerate*}[label=\textbf{\alph*)},itemjoin=\quad\quad\quad]
\seq_map_inline:Nn \l_tmpa_seq { \item ##1 }
\end{enumerate*}}
\ExplSyntaxOff

\begin{document}

\title{Self-Referential Tests}

\author{Siyona Agarwal}
\author{Julian Bernhoft}
\author{Karam Gill}
\author{Yonis Gulleth}
\author{Eric Huang}
\author{Benjamin Li}
\author{Brandon Ni}
\author{Soham Samanta}
\author{Leone Seidel}
\author{Boya Yun}
\affil{PRIMES STEP}
\author{Tanya Khovanova}
\affil{MIT}

\maketitle

\begin{abstract}
We study self-referential multiple-choice tests with the question: \emph{How many correct answer choices are there?}
The answer choices are positive integers. A value $a$ is called \emph{valid} if it occurs exactly $a$ times among answer choices. The \emph{cost} of a test is the sum of all answer choices, linking the problem to integer partitions.

Using this framework, we define solvable and $k$-solvable tests and derive generating functions that enumerate
them by cost, number of distinct valid values, and number of options. We also investigate extremal questions, including minimum costs and the maximum possible number of valid values. The paper was inspired by a puzzle from \emph{Mathematical Puzzles and Curiosities}.
\end{abstract}

\section{Original Problem}

The following puzzle appeared in the book \textit{Mathematical Puzzles and Curiosities} by Ivo David, Tanya Khovanova, and Yogev Shpilman \cite{DKS2026}. The puzzle is abridged and edited here.

\begin{quote}
While browsing the self-referential shop, you see the following poster:

\begin{center}
\textit{If you choose the answer to this question at random, what is the probability you will be correct?}

    \textbf{a) 25\%	\ \ \ \	b) 50\%	\ \ \ \		c) 60\%	\ \ \ \		d) 25\%}

\textit{Buy an instant multiple-choice IQ test here.}
\end{center}

Multiple-choice IQ tests are printed instantly for your convenience at the shop. The test asks the same question as the poster and lists several answers --- each a positive integer, followed by \%. To buy an IQ test, you must select its cost $C$ and a number of answer options $K$.

The example above has $K = 4$ and $C = 160$. Answers $50$ and $60$ each appear with frequency 25\%, while answer $25$ appears with frequency 50\%. Thus, our sample test does not have a correct answer. Here you can find three more examples of what other tests might look like.
\begin{center}
\begin{tabular}{|l|r|c|l|}
\hline
 & $C$ & $K$ & test printed \\ \hline
Test 1 & \$100 & 5 & a) 10\% b) 20\% c) 20\% d) 25\% e) 25\% \\
Test 2 & \$50 & 4 & a) 5\% b) 10\% c) 10\% d) 25\% \\
Test 3 & \$250 & 4 & a) 25\% b) 75\% c) 75\% d) 75\% \\ 
\hline
\end{tabular}
\end{center}

Question 1. Suppose you wish to purchase a \$100 multiple-choice IQ test with a correct answer. What is the maximum number of answers the test may have? What is the correct answer for that test?

Question 2. What is the cost of the cheapest IQ test with a correct answer?
\end{quote}

\section{Introduction}

Self-referential multiple-choice questions are a classic source of logical amusement. In this paper, we study a family of such questions inspired by a puzzle about customized IQ tests, where the possible answers are constrained by a prescribed total cost. The original version involves percentages and asks for the probability that a randomly chosen answer is correct. This leads naturally to questions about when a test has no correct answer, exactly one correct answer, or several seemingly correct answers.

In Section~\ref{sec:NoPercentages}, we remove the special role of the number $100$. We introduce a simpler and more flexible model, which we call the \emph{natural test}. In this setting, the answer choices are positive integers, and the question asks how many correct answer choices there are. In both formulations, a test is a partition of its cost. However, in the second formulation, a value can be considered correct when its multiplicity equals the value itself.

The original puzzle asks about correct answers; however, the notion of a correct answer is confusing. Thus, in Section~\ref{sec:SolvableandUnsolvableTests}, we introduce solvable and unsolvable tests. For solvable tests, we consider monosolvable, polysolvable, and $k$-solvable tests.

Viewing tests as partitions allows us to apply methods from partition theory and generating functions. In Section~\ref{sec:genfunctions}, we derive generating functions that count solvable tests by cost, number of distinct valid answers, and the number of options. As corollaries, we get the generating functions for unsolvable and monosolvable tests by cost.

In Section~\ref{sec:minandmax}, we investigate extremal questions, such as for which $n$ and $k$ there are no tests with $k$ valid answers and cost $n$. We also derive the smallest cost of a $k$-solvable test and show that the smallest cost of a $k$-solvable test with $K$ options equals $K$ plus a constant for large $K$. We calculate the maximum number of options for a solvable test of a given cost and the value of the valid answer for such a test. We calculate the maximum possible number of distinct valid answers for a given cost or number of options. We also discuss the special class of omni-valid tests, in which every answer choice is valid.

In Section~\ref{sec:perctests}, we go back to percentage tests. We show that the maximum number of valid answers in such a test is $13$. We show that a solvable test exists if and only if the cost is greater than or equal to $19$. We also calculate the minimum cost of a $k$-solvable test and the maximum number of options in a solvable test.

\section{No Percentages}
\label{sec:NoPercentages}

The original puzzle depends on the notion of percentages, where the number $100$ plays a special role. To get rid of this extra parameter, we want to create a new test type. Our answers are now positive integers, and the question becomes 
\begin{quote}
    How many correct answer choices are there?
\end{quote}
The cost of such a test is the sum of all the values. Thus, such a test can be viewed as a \textit{partition} of its cost. We denote the number of partitions of $n$ by $p(n)$.

\begin{example}
Here we have an example of a test of cost 6. How many correct answer choices are there?
\begin{center}
\iqtest{1,1,2,2}
\end{center}
\end{example}

We call the IQ test from the book the \textit{percentage test}, and the new test the \textit{natural test}. We chose the latter name for the following two reasons. The new test is natural as it does not contain an extra parameter of $100$, and it is based on natural numbers.

Neither test depends on the order of the answers, only on their values and multiplicities. Thus, we can assume that the answers are in non-decreasing order.

For both tests, we denote the number of answers by $K$ and the cost by $C$. Consider a test $(a_1,a_2,\dots,a_K)$ with answers $a_1 \le a_2 \le \dots \le a_K$. By definition, we have
\[a_1 + a_2 + \dots + a_K = C.\]
In other words, the answers form a partition of $C$ for both tests.

We denote by $m_{a}$ the number of times the answer $a$ appears in the test. We call it the \textit{multiplicity} of $a$.

For a given natural test, we call an integer value $a$ \textit{valid} if and only if $a$ appears as the answer choice $a$ times. In other words,
\[a = m_{a}.\]

We can restate the notion of a valid answer for natural tests in terms of Young diagrams, where the row lengths correspond to answers. Suppose we split the Young diagram into maximal horizontal rectangles made of rows of the same length. The answer is valid if the corresponding rectangle is a square.

Table~\ref{tab:Examplesnatural} contains examples of natural tests that are analogous to sample percentage tests in the introduction.

\begin{table}[ht!]
    \centering
\begin{tabular}{|l|r|c|r|}
\hline
 & $C$ & $K$ & test \\ \hline
Test 1 & 10 & 5 & (1, 1, 2, 3, 3) \\
Test 2 & 9 & 4 & (1, 2, 3, 3) \\
Test 3 & 10 & 4 & (1, 3, 3, 3) \\ 
\hline
\end{tabular}
    \caption{Examples of natural tests}
    \label{tab:Examplesnatural}
\end{table}

The original puzzle asks about correct answers. However, the book explains that the notion of the correct answer is nontrivial. For example, in Test 3, one might think that the answers $1$ and $3$ should be considered correct. However, then we have $4$ correct answers in total, creating a contradiction. Thus, we introduce solvable and unsolvable tests in the next section.

The original questions in the percentage puzzles can be reformulated in terms of natural tests in the following way. Question 2 asks for the cost of the cheapest test with a correct answer. We can immediately answer this question for natural tests: The cheapest test with a valid answer has one option, $1$, and a cost of $1$.

Question 1 asks for the maximum number of options a solvable test of a given cost might have and for the value of the valid answer in such a case.

\section{Solvable and Unsolvable Tests}
\label{sec:SolvableandUnsolvableTests}

We call a test \textit{unsolvable} if there are no valid answers. We call a test \textit{solvable} if there is at least one valid answer.

We call a test \textit{monosolvable} if there is exactly one valid answer. We call a test \textit{polysolvable} if it has more than one valid answer. In addition, we call a test $k$-\textit{solvable} if it has exactly $k$ distinct valid answers. In particular, monosolvable tests are also $1$-solvable.

\begin{example}
For both percentage and natural tests, we have the following. Test $1$ is unsolvable. Test $2$ is monosolvable. Test $3$ is polysolvable.
\end{example}

What should we call a correct test or a correct answer, for that matter? An unsolvable test cannot be correct.

Consider a polysolvable test with $k > 1$ distinct valid answers $a_1,\dots, a_k$. Suppose that we call all valid answers correct. Then, the total number $t$ of valid choices is
\[t = (a_1+a_2 + \dots+a_k).\]

However, the value $t$ is greater than any of the valid $a_i$, implying that the values $a_1,\dots, a_k$ are incorrect. Thus, we can argue that we should call an answer \textit{correct} if it is the only valid answer of a monosolvable test.

For which costs $C$ do the solvable tests exist?

\begin{theorem}
\label{thm:existence}
A solvable natural test exists for any positive cost except 2.
\end{theorem}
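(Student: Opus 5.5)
The proof has two parts: a direct check that cost $2$ admits no solvable test, and an explicit construction for every other positive cost.

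\emph{Cost $2$.} The only partitions of $2$ are $(2)$ and $(1,1)$. In $(2)$ the value $2$ appears once, so it is not valid. In $(1,1)$ the value $1$ appears twice, so it is not valid either. Hence no test of cost $2$ is solvable.

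\emph{All other costs.} The idea is to start from the single valid answer $1$ and pad the test with one large option that cannot itself be valid.
\begin{itemize}
\item For $C=1$, the test $(1)$ is solvable, since $1$ appears exactly once.
\item For $C\ge 3$, take the test $(1, C-1)$. Its cost is $C$, and $C-1\ge 2$, so the value $1$ appears exactly once and is valid. The test is therefore solvable. (The option $C-1$ appears once, so it is valid only when $C=2$, which is precisely the excluded case.)
\end{itemize}

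An alternative construction uses the square $(2,2)$ of cost $4$. One could also note that any cost $C\ge 3$ can be written as $1+(C-1)$ with $C-1\ne 1$, which is exactly the construction above.

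There is no real obstacle here. The only point needing care is that the padding option $C-1$ must differ from $1$, so that the multiplicity of $1$ stays exactly one. This condition fails only when $C=2$, which explains why that cost is the unique exception.
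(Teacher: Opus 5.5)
Your proof is correct and is essentially the paper's: check $(1)$ for $C=1$, use $(1,C-1)$ with $C-1\neq 1$ for $C\ge 3$, and rule out $(1,1)$ and $(2)$ for $C=2$. One small aside: your parenthetical claim that $C-1$ would be valid when $C=2$ is off, since then the test is $(1,1)$ and nothing is valid, but this remark plays no role in the argument.
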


\begin{proof}
If $C = 1$, then there is only one test, (1), and it is solvable. Suppose $C > 1$. Consider a natural test with $2$ answers: $1$ and $C-1$. As long as $C-1 \neq 1$, this test is solvable. We are left with the case of $C = 2$, in which case we have two possible tests: $(1,1)$ and $(2)$, neither of which is solvable.
\end{proof}

\section{Generating Functions}
\label{sec:genfunctions}

If $p(n,k)$ is the number of partitions of $n$ into exactly $k$ summands, then the bivariate generating function $\mathcal{P}(x,y)$ is well known \cite{Andrews, Wilf}:
\[
\mathcal{P}(x,y) = \sum_{n=0}^{\infty}\sum_{k=0}^{\infty} p(n,k)y^k x^n
= \prod_{m=1}^{\infty}\frac{1}{1-yx^m}.
\]

For the number $p(n)$ of partitions of $n$, the ordinary generating function $\mathcal{P}(x)$ is
\[
\mathcal{P}(x) = \sum_{n=0}^{\infty}p(n) x^n
= \prod_{m=1}^{\infty}\frac{1}{1-x^m}.
\]

We also want to count solvable tests. Let $s(n,k,K)$ be the number of tests with cost $n$, exactly $k$ distinct valid answers, and $K$ options. Then the trivariate generating function $\mathcal{S}(x,y,z)$, defined as
\[\mathcal{S}(x, y, z) = \sum_{n \geq 0} \sum_{k \geq 0} \sum_{K \geq 0} s(n,k,K) x^n y^k z^K\]
can be calculated as follows.

\begin{theorem}
\label{thm:trivariatesolvable}
    The trivariate generating function $\mathcal{S}(x,y,z)$ is
    \[\mathcal{S}(x, y, z) = \sum_{n \geq 0} \sum_{k \geq 0} \sum_{K \geq 0} s(n,k,K) x^n y^k z^K = \prod_{i=1}^{\infty} \left(\frac{1}{1-x^iz} + (y-1)x^{i^2}z^i\right).\]
\end{theorem}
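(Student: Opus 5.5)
We will prove the identity by the standard factorization of partition generating functions according to part sizes. A test is determined by its multiplicity function $i \mapsto m_i$, where the $m_i \ge 0$ are almost all zero. In that language the cost is $n=\sum_i i\,m_i$ and the number of options is $K=\sum_i m_i$. The number of valid answers is $k=\#\{i : m_i = i\}$, where only $i\ge 1$ matters, since $m_i=0\neq i$. All three statistics are additive over part sizes $i$. Hence the weight $x^n y^k z^K$ of a test factors as
\[
\prod_{i\ge 1} x^{i m_i}\, z^{m_i}\, y^{[m_i=i]},
\]
and summing over all multiplicity functions gives $\mathcal{S}(x,y,z)=\prod_{i\ge1} F_i$ with
\[
F_i=\sum_{m\ge 0} x^{im} z^{m} y^{[m=i]} .
\]

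Next we compute each local factor $F_i$. Without the $y$-marking, the sum is the geometric series $\sum_{m\ge0}(x^iz)^m = 1/(1-x^iz)$, exactly as in the derivation of $\mathcal{P}(x,y)$. The marking changes only the single term $m=i$. That term should be $y\,x^{i^2}z^i$ instead of $x^{i^2}z^i$, so
\[
F_i=\frac{1}{1-x^iz}+(y-1)x^{i^2}z^i.
\]
Multiplying over $i$ yields the claimed formula.

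The only step needing care is justifying the infinite product as formal power series: the rearrangement from a sum over tests to a product over $i$. For this we observe two facts. First, each factor $F_i$ is $1+O(x^i)$. Second, for fixed $n$ only the finitely many $i\le n$ contribute to the coefficient of $x^n$. So the product converges in $\mathbb{Z}[y,z][[x]]$. Moreover, for each $n$ the coefficient of $x^n$ is obtained by choosing, for each $i\le n$, one term $m_i$ from $F_i$. These choices are in bijection with partitions of $n$, that is, tests of cost $n$, and each choice carries exactly the weight $y^kz^K$. This bijection is the main point to state cleanly; the remainder is routine.
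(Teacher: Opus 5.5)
Your proposal is correct and follows essentially the same approach as the paper: factor by part size $i$, compute the local factor $\sum_{m\ge0}x^{im}z^my^{[m=i]}=\frac{1}{1-x^iz}+(y-1)x^{i^2}z^i$ by correcting the single $m=i$ term of the geometric series, then multiply over $i$. The one addition is your justification that the infinite product converges in $\mathbb{Z}[y,z][[x]]$ and matches tests term by term, which the paper leaves implicit.
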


\begin{proof}
To construct this generating function, we consider each possible value $i \geq 1$ and determine its contribution based on how many times it appears as an answer in the test. If the value $i$ appears exactly $m$ times:
\begin{itemize}
    \item The contribution to the cost is $im$, giving factor $x^{im}$.
    \item The contribution to the total number of options is $m$, giving factor $z^m$.
    \item If $m = i$, then $i$ is a valid answer, contributing $y$ to the count of valid answers.
    \item If $m \neq i$, then $i$ is not a valid answer, not contributing to the $y$ count.
\end{itemize}

For a fixed value $i$, summing over all possible multiplicities $m \geq 0$, we get
\[\sum_{m=0}^{\infty}\begin{cases}
y x^{i \cdot i} z^{i} & \text{if } m = i \\
x^{i \cdot m} z^m & \text{if } m \neq i
\end{cases} = yx^{i^2}z^i + \sum_{\substack{m \geq 0 \\ m \neq i}} x^{im}z^m.\]

We can simplify the second term:
\[\sum_{\substack{m \geq 0 \\ m \neq i}} x^{im}z^m = \sum_{m=0}^{\infty} x^{im}z^m - x^{i^2}z^i = \frac{1}{1-x^iz} - x^{i^2}z^i.\]

Therefore, the contribution from value $i$ is
\[yx^{i^2}z^i + \frac{1}{1-x^iz} - x^{i^2}z^i = \frac{1}{1-x^iz} + (y-1)x^{i^2}z^i.\]

Taking the product over all values $i \geq 1$, we get
\[\mathcal{S}(x,y,z) = \prod_{i=1}^{\infty} \left(\frac{1}{1-x^iz} + (y-1)x^{i^2}z^i\right).\]
\end{proof}

Let $s(n,k)$ denote the number of partitions of $n$ in which exactly $k$ distinct part sizes $i$ occur with multiplicity equal to $i$. Equivalently, $s(n,k)$ is the number of tests with cost $n$ and exactly $k$ distinct valid answers. We have
\[s(n,k) = \sum_{K=0}^\infty s(n,k,K).\]
The bivariate generating function
\[\mathcal{S}(x,y) = \sum_{n,k\ge 0} s(n,k) x^n y^k\]
can be obtained from Theorem~\ref{thm:trivariatesolvable} by replacing $z$ with $1$ in $\mathcal{S}(x,y,z)$.

\begin{corollary}
\label{cor:bivariatesolvable}
    The bivariate generating function for the number of tests with cost $n$ and exactly $k$ distinct valid answers is
    \[\mathcal{S}(x,y) = \prod_{i=1}^{\infty}
\left( \frac{1}{1-x^i} + (y-1)x^{i^2} \right).\]
\end{corollary}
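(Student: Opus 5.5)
The plan is to specialize Theorem~\ref{thm:trivariatesolvable} at $z=1$ and justify that this specialization is legitimate. Since $s(n,k)=\sum_{K} s(n,k,K)$, we have, at least formally,
\[
\mathcal{S}(x,y,1)=\sum_{n,k}\Bigl(\sum_K s(n,k,K)\Bigr)x^ny^k=\mathcal{S}(x,y).
\]
Substituting $z=1$ into each factor $\frac{1}{1-x^iz}+(y-1)x^{i^2}z^i$ gives $\frac{1}{1-x^i}+(y-1)x^{i^2}$, which is exactly the claimed product.

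The only real point to check is that setting $z=1$ is a valid operation on formal power series. I would argue as follows.
\begin{itemize}
\item For fixed $n$, every partition of $n$ has at most $n$ parts, so $s(n,k,K)=0$ for $K>n$.
\item Hence each coefficient of $x^ny^k$ in $\mathcal{S}(x,y,z)$ is a polynomial in $z$, and evaluating at $z=1$ gives the finite sum $s(n,k)$.
\item On the product side, the $i$-th factor equals $1+O(x^i)$ in $\mathbb{Z}[y,z][[x]]$. So the infinite product converges $x$-adically, and only finitely many factors affect any given power of $x$.
\item Evaluation at $z=1$ is a ring homomorphism $\mathbb{Z}[y,z][[x]]\to\mathbb{Z}[y][[x]]$ that is continuous in the $x$-adic topology. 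It therefore commutes with the convergent infinite product.
\end{itemize}

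Alternatively, one can avoid the specialization entirely by repeating the proof of Theorem~\ref{thm:trivariatesolvable} without tracking $z$. For each value $i$, the multiplicity $m$ contributes $x^{im}$, with an extra factor $y$ when $m=i$. This gives the local factor
\[
\sum_{m\ge0}x^{im}+(y-1)x^{i^2}=\frac{1}{1-x^i}+(y-1)x^{i^2},
\]
and taking the product over $i$ gives the result.

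I expect no real obstacle. The only subtlety is the convergence justification above, namely the finiteness of the $z$-degree in each $x^n$ coefficient.
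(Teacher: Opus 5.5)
Your proposal is correct and matches the paper, which obtains the corollary simply by substituting $z=1$ into Theorem~\ref{thm:trivariatesolvable}. Your $x$-adic argument (each $x^n$-coefficient is polynomial in $z$ because $s(n,k,K)=0$ for $K>n$, and the $i$-th factor is $1+O(x^i)$, so evaluation at $z=1$ commutes with the product) supplies a justification that the paper leaves implicit.
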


We now want to calculate the generating function for a fixed $k$. Let
\[
\mathcal{S}_k(x)=\sum_{n\ge0}s(n,k)x^n.
\]

\begin{corollary}
    We have \[
\mathcal{S}_k(x)=
\prod_{j=1}^{\infty}\left(\frac{1}{1-x^j}-x^{j^2}\right)
\sum_{1\le i_1< \cdots <i_k}
\prod_{r=1}^{k}
\frac{x^{i_r^2}(1-x^{i_r})}{1-x^{i_r^2}+x^{i_r^2+i_r}} .
\]
\end{corollary}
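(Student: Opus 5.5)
We extract the coefficient of $y^k$ from Corollary~\ref{cor:bivariatesolvable}. Write each factor there as $A_i + yB_i$, where
\[
A_i=\frac{1}{1-x^i}-x^{i^2}, \qquad B_i = x^{i^2}.
\]
The term $A_i$ accounts for multiplicities $m\neq i$ of the value $i$, and $B_i$ accounts for $m=i$. Expanding the infinite product $\prod_i (A_i+yB_i)$, the coefficient of $y^k$ is obtained by choosing $B_i$ for exactly $k$ indices $i_1<\dots<i_k$ and $A_j$ for every other $j$. This gives
\[
\mathcal{S}_k(x)=\sum_{1\le i_1<\cdots<i_k}\prod_{r=1}^k B_{i_r}\prod_{j\notin\{i_1,\dots,i_k\}} A_j .
\]

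Next we factor out the full product $\prod_{j\ge1}A_j$, so each chosen index contributes $B_{i_r}/A_{i_r}$. It remains to simplify this ratio:
\[
A_i=\frac{1-x^{i^2}(1-x^i)}{1-x^i}=\frac{1-x^{i^2}+x^{i^2+i}}{1-x^i},
\]
hence
\[
\frac{B_i}{A_i}=\frac{x^{i^2}(1-x^i)}{1-x^{i^2}+x^{i^2+i}},
\]
which is exactly the factor in the stated formula.

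The only point needing care is legitimacy as formal power series. Each $A_j$ has constant term $1$, so it is invertible and the division is valid. Moreover $A_j\equiv 1 \pmod{x^j}$, so $\prod_j A_j$ converges $x$-adically. The sum over $k$-subsets also converges, because the term for $i_1<\dots<i_k$ has $x$-adic order at least $i_1^2+\dots+i_k^2$, and only finitely many subsets have this below any given bound. These facts justify extracting the $y^k$ coefficient termwise. The main obstacle is therefore only this bookkeeping; the algebra itself is routine.

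Alternatively, one can argue combinatorially without the bivariate function. Fix the set $\{i_1,\dots,i_k\}$ of valid values. Each $i_r$ then has multiplicity exactly $i_r$, contributing $x^{i_r^2}$, while every other value $j$ has multiplicity $m\neq j$, contributing $A_j$. Summing over all such sets gives the same expression.
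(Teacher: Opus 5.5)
Your proposal is correct and follows essentially the same route as the paper: you split each factor of $\mathcal{S}(x,y)$ as $\bigl(\frac{1}{1-x^i}-x^{i^2}\bigr)+yx^{i^2}$, take the $y$-term from exactly $k$ factors, factor out the full product over $j$, and simplify the ratio. Your remarks on $x$-adic convergence and on invertibility of each factor add rigor that the paper leaves implicit.
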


\begin{proof}
    From Corollary~\ref{cor:bivariatesolvable}, we can rewrite each factor as
\[
\frac{1}{1-x^i}+(y-1)x^{i^2}
= \left(\frac{1}{1-x^i}-x^{i^2}\right)+y\,x^{i^2}.
\]
Selecting the term $yx^{i^2}$ from exactly $k$ factors and the other term from the remaining factors gives
\[
\mathcal{S}_k(x) =
\sum_{1\le i_1< \cdots <i_k}
x^{i_1^2+ \cdots +i_k^2}
\prod_{j\notin\{i_1,\dots,i_k\}}
\left(\frac{1}{1-x^j}-x^{j^2}\right).
\]

Factoring the product over all $j$ yields
\[
\mathcal{S}_k(x)=
\prod_{j=1}^{\infty}\left(\frac{1}{1-x^j}-x^{j^2}\right)
\sum_{1\le i_1< \cdots <i_k}
\prod_{r=1}^{k}
\frac{x^{i_r^2}(1-x^{i_r})}{1-x^{i_r^2}+x^{i_r^2+i_r}} .
\]
\end{proof}

Table~\ref{tab:s(n,k)} shows values $s(n,k)$. The value of $k$ ranges from $0$ to $3$ and corresponds to rows, and the value of $n$ ranges from $0$ to $18$ and corresponds to columns. The cells with zeros are left blank for better visualization. The table is represented in the OEIS database \cite{OEIS} as an irregular table read by columns.
\begin{table}[ht!]
    \centering
\begin{tabular}{c|cccccccccccccccc}
$k\backslash n$ & 0 & 1 & 2 & 3 & 4 & 5 & 6 & 7 & 8 & 9 & 10 & 11 & 12 & 13 & 14 & 15 \\ \hline
0 & 1 & & 2 & 2 & 3 & 5 & 8 & 9 & 16 & 19 & 29 & 36 & 53 & 65 & 92 & 115 \\
1 & & 1 & & 1 & 2 & 1 & 3 & 6 & 5 & 10 & 11 & 18 & 21 & 32 & 38 & 54 \\
2 & & & & & & 1 & & & 1 & 1 & 2 & 2 & 3 & 4 & 4 & 7 \\
3 & & & & & & & & & & & & & & & 1 &
\end{tabular}
    \caption{Entry in row $k$ and column $n$ is $s(n,k)$}
    \label{tab:s(n,k)}
\end{table}

For example, the row corresponding to $k=1$ describes the number of $1$-solvable tests.
\begin{example}
    The number of 1-solvable tests has a generating function 
    \[\mathcal{S}_1(x) = \sum_{j = 1}^\infty
x^{j^2}
 \prod_{i\ne j} \left(
\frac{1}{1-x^i} - x^{i^2} \right).
\]
The sequence, which is now sequence A391611, enumerates the number of $1$-solvable tests. Starting from index $0$, it is
\[0,\ 1,\ 0,\ 1,\ 2,\ 1,\ 3,\ 6,\ 5,\ 10,\ 11,\ 18,\ 21,\ 32,\ 38,\ \ldots.\]
\end{example}

The generating function $\mathcal{S}_0(x)$ describes the number of unsolvable tests, which is the first row in Table~\ref{tab:s(n,k)} and is also represented by sequence A276429 in the OEIS database \cite{OEIS}. We have
    \[\mathcal{S}_0(x) = \prod_{j=1}^{\infty}\left(\frac{1}{1-x^j}-x^{j^2}\right).\]


We are also interested in the total number of solvable tests. We denote the number of solvable tests with cost $n$ as $s_{>0}(n)$, and the corresponding generating function as $\mathcal{S}_{>0}(x)$. The generating function for such a sequence is the difference between the total number of partitions and the number of unsolvable tests:
\[\mathcal{S}_{>0}(x) = \prod_{k = 1}^\infty \frac{1}{1 - x^k} - \prod_{k = 1}^\infty \frac{1 - x^{k^2} + x^{k^2 + k}}{1 - x^{k}}.\]

Equivalently, this is the number of partitions into parts such that there is a part $p_i$ with multiplicity $p_i$. The sequence starts at index $0$, and this is now a new sequence A392745 in the OEIS:
\[0,\ 1,\ 0,\ 1,\ 2,\ 2,\ 3,\ 6,\ 6,\ 11,\ 13,\ 20,\ 24,\ 36,\ 43,\ 61,\ 77,\ 102,\ 128,\ \ldots.\]

\begin{example}
    We have three solvable tests with cost 6: $(1,2,3)$, $(1,5)$, and $(1,1,2,2)$. The number of partitions of $6$ is $11$. Thus, the probability that a random test is solvable is $\frac{3}{11}$.
\end{example}

\section{Minima and Maxima}
\label{sec:minandmax}

\subsection{No tests with \texorpdfstring{$k$}{k} valid answers}

Denote by $Q_k$ the sum of the first $k$ squares. That is, $Q_k = \frac{k(k+1)(2k+1)}{6}$. 

The following theorem describes when $s(n,k) = 0$, depending on its proximity to $Q_k$.

\begin{theorem}
\label{thm:pnkis0}
    For $k > 0$, we have $s(n,k) = 0$ if and only if
    \[n<\dfrac{k(k+1)(2k+1)}{6}+k+1 = Q_k+k+1,\]
    and 
    \[n \ne \dfrac{k(k+1)(2k+1)}{6} = Q_k.\]
\end{theorem}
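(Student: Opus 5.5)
We argue both directions directly from the definition: a test with exactly $k$ valid answers $a_1<\dots<a_k$ contains each $a_r$ exactly $a_r$ times, so its cost is at least $a_1^2+\dots+a_k^2\ge Q_k$.

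\emph{Impossibility.} Suppose a $k$-solvable test has cost $n$ with $Q_k< n< Q_k+k+1$. Let $R$ be the multiset of non-valid options, so $n=\sum a_r^2+\sum_{b\in R} b$. Two cases arise.

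\begin{itemize}
\item If the valid set is not $\{1,\dots,k\}$, then $\sum a_r^2\ge Q_k-k^2+(k+1)^2=Q_k+2k+1$. This already exceeds the range, so the case cannot occur.
\item If the valid set is $\{1,\dots,k\}$, then $R$ is nonempty and sums to $n-Q_k\in[1,k]$. Every element of $R$ is at most $k$, so every part of $R$ is some $j\le k$. But then $j$ appears $j$ times from the valid block plus at least once more from $R$. Its multiplicity exceeds $j$, so $j$ is no longer valid, a contradiction.
\end{itemize}

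Together with $n<Q_k$, which is impossible by the lower bound, this gives $s(n,k)=0$ for all $n<Q_k+k+1$ with $n\ne Q_k$.

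\emph{Existence.} For $n=Q_k$, the test with each $j\in\{1,\dots,k\}$ repeated $j$ times works. For $n\ge Q_k+k+1$, set $r=n-Q_k\ge k+1$ and try to append to this test a multiset of parts, each larger than $k$, summing to $r$, such that no new valid answer appears.

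\begin{itemize}
\item The simplest choice is the single part $r$. It is invalid unless $r=1$, and $r=1$ is impossible since $r\ge k+1\ge 2$.
\item A single part of size $r>k$ does not disturb the valid values $1,\dots,k$, so the test has exactly $k$ valid answers.
\end{itemize}

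This case is therefore straightforward. The main point requiring care is the impossibility direction, namely the observation that any extra part of size at most $k$ destroys a valid answer. Since $r\ge 2$ always holds here, the construction needs no further casework.
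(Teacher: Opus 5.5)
Your proof is correct and follows essentially the same route as the paper's. You use the same bound $Q_k+2k+1$ when some valid answer exceeds $k$, the same observation that any extra part of size at most $k$ destroys a valid answer, and the same witness (each $j\le k$ repeated $j$ times, plus a single part $n-Q_k\ge k+1$ when $n\ge Q_k+k+1$).
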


\begin{proof}
The smallest possible cost for a test with $k$ valid answers is $Q_k$, where the valid answers are numbers $1$ through $k$. Therefore, for $n < Q_k$ we have $s(n,k) = 0$. 

Also, for $n= Q_k$, we have $s(n,k) = 1$.

Consider a test with cost $n$, such that $Q_k < n < Q_{k} + k + 1$. If such a test has $k$ valid answers and one of them is greater than $k$, the smallest possible cost happens when the valid answers are 1, 2, $\ldots$, $k-2$, $k-1$, $k+1$, which is at least $Q_k + 2k+1$, exceeding our given cost. So we must have the numbers from the set $1, 2, \dots, k$ as our valid answers. They all sum to $Q_k$. To achieve our cost, we must include additional invalid answers, each with a value at least $k+1$, so as not to destroy the validity of the given valid answers. The total will exceed our given cost.

For any $n\ge Q_k +k+1$, we know that $s(n,k) \ge 1$. We can describe one such test with valid answers $1$ through $k$, and one invalid answer $n-Q_k$. Notice that $n-Q_k$ is greater than or equal to $k+1$, making it invalid.
\end{proof}

\subsection{Cheapest \texorpdfstring{$k$}{k}-solvable test}

We start by discussing the minimum cost given $k$, the number of valid answers. In other words, given $k$, we are looking for the smallest nonzero $s(n,k)$. We get the following corollary from Theorem~\ref{thm:pnkis0}.

\begin{corollary}
    The minimum cost of a $k$-solvable natural test is
    \[Q_k = \frac{k(k+1)(2k+1)}{6}.\]
\end{corollary}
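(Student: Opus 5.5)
The plan is to read the corollary off Theorem~\ref{thm:pnkis0}, but to re-derive the lower bound directly so the argument stands on its own. There are two steps: first show that every $k$-solvable test costs at least $Q_k$, and then exhibit a $k$-solvable test of cost exactly $Q_k$.

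\emph{Lower bound.} Take a $k$-solvable test and let its valid answers be $a_1<a_2<\dots<a_k$. Each $a_r$ appears exactly $a_r$ times, so these options alone contribute $\sum_{r=1}^k a_r^2$ to the cost. The remaining answers are positive, so they only increase the cost. The $a_r$ are distinct positive integers in increasing order, so $a_r\ge r$ for every $r$, which gives $\sum a_r^2\ge\sum_{r=1}^k r^2=Q_k$. This is the same observation that opens the proof of Theorem~\ref{thm:pnkis0}, namely that $s(n,k)=0$ for $n<Q_k$.

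\emph{Attainment.} Consider the test in which each value $r\in\{1,\dots,k\}$ appears exactly $r$ times and nothing else appears. Its cost is $Q_k$. Every value $1,\dots,k$ is valid, and no other value occurs, so the test has exactly $k$ valid answers. This matches the statement $s(Q_k,k)=1$ in Theorem~\ref{thm:pnkis0}, whose non-vanishing clause ($n=Q_k$ is allowed) already shows that $Q_k$ is the least $n$ with $s(n,k)\neq 0$.

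There is no real obstacle here. The only point needing care is the inequality $a_r\ge r$ for distinct increasing positive integers, which follows by a trivial induction on $r$.
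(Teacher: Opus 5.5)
Your proposal is correct and follows the paper's route. The paper reads the corollary off Theorem~\ref{thm:pnkis0}, whose proof rests on the same two observations: $k$ distinct valid answers force a cost of at least $Q_k$, and the test with each $r\le k$ repeated $r$ times attains it. You additionally spell out the inequality $a_r\ge r$ behind $\sum a_r^2\ge Q_k$, which the paper leaves implicit.
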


\begin{example}
    A $3$-solvable test has a minimum cost of $14$ and has options $1$, $2$, $2$, $3$, $3$, and $3$.
\end{example}

It follows that the maximum number of distinct valid answers for a natural test of cost $C$ is approximately $\sqrt[3]{3C}$.

We now seek the cheapest $k$-solvable natural test with exactly $K$ options. We start with an example for $k = 1$.

\begin{example}
    Consider a $1$-solvable test. If there is one option, the minimum cost of such a test is $1$. If there are two options, the minimum cost is $3$ with options $1$ and $2$. For $K > 3$, the minimum cost of a solvable natural test with exactly $K$ options is $K+2$: a test with $2$ occurrences of $2$ and $K-2$ occurrences of $1$ is solvable and costs $K+2$. The previous pattern does not extend to $K=3$ as the test $(1,2,2)$ is $2$-solvable. The cheapest test in this case is $(1,2,3)$ with a cost of $6$.
\end{example}

The pattern that we see in the example above for $k=1$ can be extended to other $k$.

\begin{theorem}
\label{thm:cheapestksolvabletestwithkoptions}
    For $K > \frac{(k+1)(k+2)}{2}$, the cost of the cheapest $k$-solvable test with $K$ options is $K+\frac{k(k+1)(k+2)}{3}$.
\end{theorem}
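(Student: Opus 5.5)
The plan is to work with the \emph{excess} of a test, $C-K=\sum_a (a-1)m_a$. This reduces the statement to two claims for $K>T_k:=\frac{(k+1)(k+2)}{2}$: every $k$-solvable test with $K$ options has excess at least $E_k:=\frac{k(k+1)(k+2)}{3}=\sum_{j=1}^{k}j(j+1)$, and some such test attains it. A valid answer $a$ contributes exactly $a(a-1)$ to the excess and uses exactly $a$ options. Options of value $1$ contribute nothing.

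\emph{Construction.} Take valid answers $2,3,\dots,k+1$, each with multiplicity equal to itself. These use $T_k-1$ options and contribute excess $\sum_{a=2}^{k+1}a(a-1)=E_k$. Fill the remaining $K-T_k+1\ge 2$ options with $1$'s. The multiplicity of $1$ is then at least $2$, so $1$ is not valid, and the test is exactly $k$-solvable with cost $K+E_k$. This is where the hypothesis $K>T_k$ enters: for $K=T_k$ we would have a single $1$, which would be an extra valid answer, exactly as $(1,2,2)$ failed in the example for $k=1$.

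\emph{Lower bound, case $1$ not valid.} The valid answers are $k$ distinct integers $\ge 2$. Since $a(a-1)$ is increasing in $a$, their contribution is at least $\sum_{a=2}^{k+1}a(a-1)=E_k$. All other options contribute nonnegative excess, so we are done in this case.

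\emph{Lower bound, case $1$ valid.} This is the main obstacle. Let $V\ni 1$ be the set of valid values, with $|V|=k$ and $S=\sum_{a\in V}a$. Then $m_1=1$, so every one of the $K-S$ options outside the valid ``squares'' has value at least $2$ and contributes excess at least $1$. Hence
\[C-K\ \ge\ \sum_{a\in V}a(a-1)+(K-S)\ \ge\ \sum_{a\in V}a(a-1)+T_k+1-S .\]
It remains to show that $f(V):=\sum_{a\in V}(a^2-2a)\ge E_k-T_k-1$ over $k$-sets $V$ containing $1$. Since $a^2-2a$ is increasing for $a\ge1$, $f$ is minimized at $V=\{1,\dots,k\}$, where
\[f(V)=Q_k-k(k+1)=\frac{k(k+1)(2k-5)}{6}.\]
So the crux is the polynomial inequality $\frac{k(k+1)(2k-5)}{6}\ge \frac{k(k+1)(k+2)}{3}-\frac{(k+1)(k+2)}{2}-1$, which must be checked directly.

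If this check fails for some $k$, the crude bound of $1$ per extra option is too weak, and I will sharpen it. An extra option cannot have a value $a\in V$, since those multiplicities are fixed at $a$. So when $V=\{1,\dots,k\}$, every extra option has value at least $k+1$ and contributes excess at least $k$. In general, extras can only have values outside $V$, and the smallest available value $u\ge2$ gives excess at least $u-1$ per extra. I would then split on whether $2\in V$. If $2\notin V$, compare $V$ with $\{1,3,4,\dots,k+1\}$; there the bound is tight, with total exactly $E_k-2+2$. If $2\in V$, each extra costs at least $2$, and the comparison has more slack. The goal in every subcase is to show the total is at least $E_k$.
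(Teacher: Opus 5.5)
Your construction and your treatment of the case where $1$ is not valid are correct, but the case where $1$ is valid has a genuine gap. The polynomial inequality you defer is false for every $k\ge 2$. After multiplying by $6$, the left side is $2k^3-3k^2-5k$ and the right side is $2k^3+3k^2-5k-12$, so their difference is $12-6k^2$.

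Your fallback does not repair this in general. In the branch $2\in V$ you charge only $2$ per extra option. Take $V=\{1,\dots,k\}$ with $K=T_k+1$, using your $T_k=\frac{(k+1)(k+2)}{2}$, so there are $k+2$ extras. Then your bound gives
\[
\frac{(k-1)k(k+1)}{3}+2(k+2)=E_k-(k^2-k-4),
\]
which is below $E_k$ for all $k\ge 3$. For instance, $k=3$, $V=\{1,2,3\}$, $K=11$ gives only $8+10=18<20$. So the two-way split on whether $2\in V$ settles only $k\le 2$. Separately, in the branch $2\notin V$ there are at least $3$ extras, not $2$, so that bound is $E_k+1$; this slip is harmless.

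The missing step is to use the smallest positive integer $u\notin V$ in full, rather than only distinguishing $u=2$ from $u\ge 3$. Then $V\supseteq\{1,\dots,u-1\}$. The number of extras is $K-S\ge T_k+1-S$, and each extra has excess at least $u-1$. Hence
\[
C-K\ \ge\ \sum_{a\in V}a(a-1)+(u-1)(T_k+1-S)=\sum_{a\in V}a(a-u)+(u-1)(T_k+1).
\]
The elements of $V$ below $u$ are forced. The other $k-u+1$ elements lie in $\{u+1,u+2,\dots\}$, where $a(a-u)$ is increasing. So the right side is minimized at $V=\{1,\dots,k+1\}\setminus\{u\}$, where it equals
\[
\sum_{a=1}^{k+1}a^2-T_k+u-1=E_k+u-1>E_k .
\]
This closes the case, and also shows that $1$ is never valid in a cheapest test.

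The paper's own proof handles this step only informally. It asserts that the optimal value set is $\{1,\dots,k+1\}$ with $1$ as the invalid answer. Your excess bookkeeping, with this correction, gives a rigorous version of that argument.
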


\begin{proof}
Suppose that we have $k$ valid answers. Note that it is optimal to always set the remaining values to the same smallest available number. Suppose the remaining answers are equal to $c$. It is also optimal to minimize the set of valid answers. Thus, the set of all the distinct answer values is $\{1, 2, 3, \dots, k+1\}$.

The only $(k+1)$-solvable test with valid answers $\{1, 2, 3, \dots, k+1\}$ has to have 
\[\sum_{a=1}^{k+1}a=\frac{(k+1)(k+2)}{2}\]
options, and the cost of such a test is 
\[\sum_{a = 1}^{k + 1}a^2 = \frac{(k + 1)(k + 2)(2k + 3)}{6}.\]

By our assumption, our test has more options, implying that if its set of answers is $\{1, 2, 3, \dots, k+1\}$, not all of the answers are valid and the test cannot be $(k+1)$-solvable. Moreover, such a test is the cheapest if the invalid answer is 1.

The cost of such a test is 
\[\frac{(k+1)(k+2)(2k+3)}{6}+\left(K-\frac{(k+1)(k+2)}{2}\right),\]
which can be simplified to
\[K+\frac{(k + 1)(k + 2)(2k + 3) - 3(k+1)(k+2)}{6} = K + \frac{k(k + 1)(k + 2)}{3}\]
as desired.
\end{proof}

We may observe that this formula holds for $k = 1$. We now apply it to $k = 2$.

\begin{example}
For $k=2$, the theorem above tells us that for $K > 6$, the minimum cost of a $2$-solvable natural test with $K$ options is $K + 8$. 

Now we compute the cost for $K \le 6$. We do this by exhaustive search. If $K = 3$, the smallest cost of $5$ is achieved for the test $(1,2,2)$. If $K = 4$, the smallest cost of $8$ is achieved for the test $(1,2,2,3)$. If $K = 5$, the smallest cost of $11$ is achieved for the test $(1,2,2,3,3)$. If $K = 6$, the smallest cost of $15$ is achieved for the test $(1,2,2,3,3,4)$. We obtain the following sequence for the smallest cost of a 2-solvable test as a function of $K$, starting with $K = 3$: 
\[5,\ 8,\ 11,\ 15,\ 15,\ 16,\ 17,\ \ldots.\]
\end{example}

\subsection{Question 1}

Question 1 in the introduction asks about the maximum number of options for a solvable test of a given cost and the valid answer in such a test. 

A related question asks for the cheapest solvable test with $K$ options. For $k=1$, Theorem~\ref{thm:cheapestksolvabletestwithkoptions} tells us that for $K > 3$, the minimum cost of a $1$-solvable natural test with $K$ options is $K + 2$. 

\begin{theorem}
     For $C = 0$ or $C = 2$, a solvable test does not exist. If $M_C$ is the maximum number of options for a solvable test of cost $C$, we have $M_C=C-2$ for $C > 3$. In addition, $M_1 = 1$, and $M_3 = 2$.
     
     In addition, for $C = 4$ and $C > 5$, such a test has a single valid answer: $2$. For $C=5$, the valid answers in such a test are $1$ and $2$; for $C = 1$ or $C=3$, such a test has a single valid answer: $1$.
\end{theorem}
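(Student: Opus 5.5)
The proof splits into an upper bound on the number of options, a matching construction, and a classification of the maximizing tests. The cases $C=0,1,2,3$ will be handled by direct enumeration at the end.

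\emph{Upper bound.} Let a test of cost $C$ with $K$ options be solvable, with valid answer $a$. Every option is at least $1$, so $C = \sum a_i \ge K + \sum (a_i - 1)$. The $a$ copies of $a$ contribute $a(a-1)$ to the excess $\sum(a_i-1)$, hence $C \ge K + a(a-1)$.
\begin{itemize}
\item If $a \ge 2$, this gives $K \le C-2$, with equality only if $a=2$, every other option equals $1$, and the multiplicity of $1$ is not $1$.
\item If $a = 1$, the option $1$ appears exactly once, so every other option is at least $2$. Then $C \ge 1 + 2(K-1)$, i.e.\ $K \le (C+1)/2$. This bound is at most $C-2$ once $C \ge 5$, with equality only at $C=5$.
\end{itemize}
So for $C \ge 5$ we get $K \le C-2$, and this is the main step. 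For $C=4$, the case $a=1$ gives $K\le 2 = C-2$, with equality requiring $(1,3)$. But $(1,3)$ has cost $4$ and only valid answer $1$, so I will handle $C=4$ by hand anyway.

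\emph{Construction.} For $C \ge 4$, take the test with two $2$'s and $C-4$ ones. It is solvable with valid answer $2$ unless $C-4$ equals $1$, that is $C=5$. For $C=5$ the test $(1,2,2)$ is still solvable, with valid answers $1$ and $2$, and has $3=C-2$ options. This matches the example preceding Theorem~\ref{thm:cheapestksolvabletestwithkoptions}, where the minimum cost for $K>3$ options is $K+2$. Hence $M_C = C-2$ for all $C \ge 4$.

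\emph{Classification of maximizers.} From the equality analysis, any solvable test with $K=C-2$ either has valid answer $a=2$ with all other options equal to $1$, or has $a=1$ with $C=5$. Once $a=2$ is valid, the multiplicity of $1$ is forced to be $C-4$.
\begin{itemize}
\item For $C=4$ or $C \ge 6$, we have $C-4 \ne 1$. So $1$ is not valid and the unique valid answer is $2$. For $C=4$ we must still rule out the alternative $(1,3)$: it has $K=2=C-2$ and is solvable with valid answer $1$. This needs care, since the statement claims $2$. I expect to resolve it by reading ``such a test'' as the test realizing the maximum via the construction, or by noting that the paper's intended answer is $(2,2)$. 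This small-case ambiguity is the main place I expect trouble.
\item For $C=5$, the candidates are $(1,2,2)$, with valid answers $1$ and $2$, and the $a=1$ equality case with $K=3$. That case would need $1$ once plus two options at least $2$ summing to $4$, which again gives $(1,2,2)$.
\end{itemize}

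\emph{Small cases.} For $C=0$ the empty test has no valid answer, and $C=2$ is covered by Theorem~\ref{thm:existence}. For $C=1$ the only test is $(1)$. For $C=3$ the partitions are $(3)$, $(1,2)$ and $(1,1,1)$, and only $(1,2)$ is solvable, with valid answer $1$. This gives $M_1=1$ and $M_3=2$.
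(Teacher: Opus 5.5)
Your proof is correct and follows essentially the paper's route. The paper takes a valid answer $b\neq 1$, splits all other answers into ones, and replaces the $b$ copies of $b$ by two $2$'s; your inequality $C\ge K+a(a-1)$ records exactly this. The case where only $1$ is valid (one $1$, all other parts at least $2$, at most $\lceil C/2\rceil$ options) is handled identically in both. Your inequality version has one advantage: it characterizes all maximizers, which is what the second half of the theorem actually needs.

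Your concern at $C=4$ points to a real defect in the statement. It cannot be ``ruled out'' or reinterpreted away. The test $(1,3)$ has cost $4$, $2=C-2$ options, and valid answer $1$. So the maximizers at $C=4$ are $(2,2)$ and $(1,3)$, consistent with $s(4,1)=2$ in the table. Hence ``such a test has a single valid answer $2$'' holds at $C=4$ only in the existential sense; as a claim about every maximizing test it is false.

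The paper's proof misses this because it asserts $\lceil C/2\rceil<C-2$ for all $C\ge 4$. That inequality fails at $C=4$ and at $C=5$. At $C=5$ the failure is harmless, since the resulting test is $(1,2,2)$ itself. State the $C=4$ correction explicitly in your write-up rather than hedging.

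A minor cleanup: in your $a\ge 2$ bullet, ``the multiplicity of $1$ is not $1$'' is not part of the equality condition. The test $(1,2,2)$ attains $K=C-2$ at $C=5$ with a single $1$, and your own $C=5$ analysis relies on that. Likewise, the test with two $2$'s and $C-4$ ones has valid answer $2$ for every $C\ge 4$, including $C=5$.
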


\begin{proof}
    Consider a solvable test of cost $C$. Suppose one of the valid answers is $b \ne 1$. To increase the number of options, we can split all the remaining answers into ones. After that, if $b > 2$, replacing $b$ with two options of two and splitting the rest of the cost into ones maximizes the number of options. In the end, we get a test with $C-2$ options. This procedure works on tests of cost at least $4$. As part of this procedure, for cost $C = 5$, we get two valid answers: $1$ and $2$; for other costs $C \ge 4$, the only valid answer is $2$.

    Suppose the valid answer is $1$. To maximize the number of options, we can replace the leftover answers with twos if the total cost is odd, and with twos and one three if the total cost is even. The number of options in such a test is $\left\lceil \frac{C}{2} \right\rceil$. For $C \ge 4$, the number of options is smaller than in the first case, with the valid answer $2$. We can check the cases of $C < 4$ manually, completing the proof.
\end{proof}

\subsection{Maximum number of valid answers per cost}

Given the cost, what is the maximum number of distinct valid answers? We can resolve this question explicitly.

\begin{theorem}
The maximum number of distinct valid answers is $k$ for the following costs.
\begin{itemize}
    \item $C = Q_k$;
    \item $Q_k + k < C < Q_{k+1}$;
    \item $Q_{k+1} < C \le Q_{k+1} + k+1$.
\end{itemize} 
\end{theorem}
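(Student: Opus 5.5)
The plan is to read everything off Theorem~\ref{thm:pnkis0}. That theorem says exactly when $s(C,j)=0$: for $j>0$, $s(C,j)\neq 0$ if and only if $C=Q_j$ or $C\ge Q_j+j+1$. The maximum number of distinct valid answers for cost $C$ is the largest $j$ with $s(C,j)\ne 0$. So we must show two things for each listed range of $C$: first, $s(C,k)\ne 0$; second, $s(C,j)=0$ for every $j>k$.

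First I check that $s(C,k)\neq 0$ in each case.
\begin{itemize}
\item For $C=Q_k$ this is immediate from Theorem~\ref{thm:pnkis0}.
\item For $Q_k+k<C<Q_{k+1}$ we have $C\ge Q_k+k+1$, so again $s(C,k)\ge 1$ by Theorem~\ref{thm:pnkis0}.
\item For $Q_{k+1}<C\le Q_{k+1}+k+1$ we have $C>Q_k+k+1$, since $Q_{k+1}=Q_k+(k+1)^2\ge Q_k+k+1$. Hence $s(C,k)\ge1$ here too.
\end{itemize}

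Next I show $s(C,j)=0$ for all $j\ge k+1$.
\begin{itemize}
\item For $j\ge k+2$, note $Q_{k+2}=Q_{k+1}+(k+2)^2>Q_{k+1}+k+1$. In all three ranges $C\le Q_{k+1}+k+1<Q_j$, so $s(C,j)=0$ by Theorem~\ref{thm:pnkis0}.
\item For $j=k+1$, Theorem~\ref{thm:pnkis0} gives $s(C,k+1)=0$ exactly when $C<Q_{k+1}+k+2$ and $C\ne Q_{k+1}$. Every $C$ in the three ranges satisfies $C\le Q_{k+1}+k+1$. None equals $Q_{k+1}$: the first range has $C=Q_k<Q_{k+1}$, and the other two exclude $Q_{k+1}$ by their strict inequalities.
\end{itemize}

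The only place needing care is the boundary bookkeeping. The second range must use $Q_k+k<C$, i.e.\ $C\ge Q_k+k+1$, matching the theorem's threshold. The upper end $Q_{k+1}+k+1$ of the third range must be exactly one below the threshold $Q_{k+1}+(k+1)+1$ at which $(k+1)$-solvable tests reappear. I also want the three ranges, taken over all $k\ge1$, to cover every $C$ for which a solvable test exists; this is the one step needing slightly more than direct substitution. To check it, observe that the gap $Q_k<C\le Q_k+k$ is precisely the third range for index $k-1$ when $k\ge2$. For $k=1$ the gap is $C=2$, the unsolvable cost from Theorem~\ref{thm:existence}, where the maximum is $0$. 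I would state this consistency check explicitly, but it is not needed for the statement as given, which only asserts the value $k$ on the listed ranges.
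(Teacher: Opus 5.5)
Your argument is correct and rests on the same facts as the paper's proof. These are: the minimum cost of a $j$-solvable test is $Q_j$; no $j$-solvable test exists when $Q_j<C\le Q_j+j$; and the test $1,\dots,k$ (each $j$ repeated $j$ times) plus one large invalid answer realizes $k$ valid answers.

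The difference is organizational. You use Theorem~\ref{thm:pnkis0} as a black box and reduce the statement to boundary bookkeeping, treating $j=k+1$ and $j\ge k+2$ separately. The paper does not cite that theorem. Instead it re-derives inline why $k+1$ valid answers are impossible in the third range: a valid answer larger than $k+1$ forces cost at least $Q_{k+1}+2k+3$. Otherwise the leftover cost $C-Q_{k+1}\le k+1$ would have to be carried by values already used as valid answers. Your version is shorter and makes the boundary conditions transparent. The paper's version is self-contained, and its constructions also work verbatim for $k=0$: the empty test for $C=0$ and the test $(2)$ for $C=2$.

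Theorem~\ref{thm:pnkis0} is stated only for $k>0$, so your step $s(C,k)\neq 0$ does not cover $k=0$. If the statement is read as including $k=0$, add the trivial direct check for $C=0$ and $C=2$; your closing remark already handles $C=2$.
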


\begin{proof}
Note that the smallest cost with $k$ distinct valid answers is $Q_k$, as, in this case, we choose the smallest possible values for valid answers.

If $C = Q_k$, we can have $k$ distinct valid answers $1, 2, \dots, k$ and no other answers.

If $Q_k + k < C < Q_{k+1}$, we can have the valid answers $1, 2, \dots, k$ and one invalid answer $C - Q_k > k$. This again yields $k$ valid answers, and we cannot have more valid answers.

If $Q_{k+1} < C \le Q_{k+1} + k+1$, let us assume that it is possible to achieve at least $k+1$ valid answers. If we choose any valid answer greater than $k+1$, the smallest cost we get is when the answers are $1$, $2$, $\ldots$, $k$, $k+2$, which has the cost of $Q_{k+1} + (k+2)^2 - (k+1)^2 = Q_{k+1} + 2k + 3$, exceeding our given cost. So we must have the numbers $1, 2, \dots, k+1$ as our valid answers. The total contribution to the cost is $Q_{k+1}$. But in order to achieve our cost $C$, we must have one or more additional invalid answers which sum to $C-Q_{k+1}$. These invalid answers must also be greater than $0$ and less than or equal to $k+1$. But we already have all numbers less than or equal to $k+1$ as valid answers, so if we add another one of those, we will change the number of times that answer appears, making it invalid. Thus, we cannot have $k+1$ valid answers.

Now, we build a test with exactly $k$ valid answers. We start with a test with answers 1 through $k$, each answer $j$ occurring $j$ times, and add one answer of value $C-Q_{k} > k$. The cost of such a test is $C$, with answer $C-Q_k$ invalid and the other $k$ answers valid, concluding the proof.
\end{proof}

We obtain the following sequence, which is now sequence A394247, for the maximum number of distinct valid answers for a cost $C$, starting with $C=0$:
\[0,\ 1,\ 0,\ 1,\ 1,\ 2,\ 1,\ 1,\ 2,\ 2,\ 2,\ 2,\ 2,\ 2,\ 3,\ 2,\ 2,\ 2,\ 3,\ 3,\ 3,\ 3,\ 3,\ \ldots.\]

\subsection{Maximum number of valid answers per number of options}

Given the number of options, what is the maximum number of distinct valid answers? We can resolve this question explicitly.

Denote by $T_n$ the $n$-th triangular number. That is, $T_n = \frac{n(n+1)}{2}$. The answer depends on how close $K$ is to $T_n$.

\begin{proposition}
For a test with $K$ total options, with $T_n \le K < T_{n+1}$, the maximum number of distinct valid answers is $n$. 

\end{proposition}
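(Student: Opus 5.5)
The argument splits into an upper bound and a matching construction.

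\emph{Upper bound.} Suppose a test with $K$ options has $m$ distinct valid answers $a_1<\dots<a_m$. Each valid answer $a_i$ occupies exactly $a_i$ of the options, so
\[K \ge a_1+\dots+a_m \ge 1+2+\dots+m = T_m.\]
If $m \ge n+1$, this gives $K \ge T_{n+1}$, contradicting $K<T_{n+1}$. Hence at most $n$ distinct valid answers are possible.

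\emph{Construction.} We need a test with exactly $K$ options and $n$ valid answers. Set $r = K - T_n$, so that $0 \le r < T_{n+1}-T_n = n+1$, that is, $0\le r\le n$.

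If $r=0$, take each value $j\in\{1,\dots,n\}$ with multiplicity $j$. This test has exactly $T_n$ options and $n$ valid answers.

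If $r\ge 1$, we must add $r$ extra options without destroying validity. The plan is to replace the largest valid answer $n$ by $n+r$, keeping $1,\dots,n-1$ as valid answers with their usual multiplicities. We then need multiplicity $n+r$ for the value $n+r$, which uses $n+r$ options. The total number of options becomes
\[T_{n-1}+(n+r)=T_n+r=K,\]
and the valid answers are $1,\dots,n-1,n+r$, which is $n$ distinct values, as required.

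The small cases need a check. When $n=1$, we have $K\in\{1,2\}$, and the construction gives the tests $(1)$ and $(2,2)$. Both are fine. When $n=0$, we have $K=0$, and the empty test trivially works.

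I expect no serious obstacle. The only subtle point is that padding with invalid answers, as in the earlier cost theorems, fails here. Adding a repeated copy of a small value breaks its validity, and adding the new value $n+1$ only $r\le n$ times keeps it invalid. So the padding approach would actually also work: add $r$ copies of $n+1$, which is invalid since $r\neq n+1$. Both constructions are valid, but the cleaner one is this padding: take the values $1,\dots,n$ with their square multiplicities, plus $r$ copies of $n+1$. Since $0\le r\le n$, the multiplicity of $n+1$ is never $n+1$, so it is invalid, and the test has exactly $K$ options and exactly $n$ valid answers.
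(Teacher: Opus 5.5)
Your proof is correct and is essentially the paper's: the bound $K \ge a_1+\dots+a_m \ge T_m$ rules out $n+1$ valid answers, and your final construction is exactly the one the paper uses, namely each $j\le n$ appearing $j$ times plus $K-T_n\le n$ copies of the invalid value $n+1$. Your alternative of replacing $n$ by $n+r$ is also valid, but your remark that padding ``fails here'' is wrong (as you then notice) and should be dropped, and ``square multiplicities'' should read ``multiplicity $j$'' so it does not suggest $j^2$ copies.
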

\begin{proof}
Note that every valid answer $a_i$ appears exactly $a_i$ times. Thus, a test with distinct valid answers $a_1, a_2, a_3,\dots, a_n$ must have at least $a_1+a_2+a_3+\dots + a_n$ total options. To maximize the number of valid answers we can have, we choose the smallest possible valid answers, that is, answers from the set $1, 2, 3, \dots, n$. Then the number of total options must be at least $1+2+3+\dots + n=T_n$.

It is impossible to add another valid answer, as that would yield at least $T_{n+1}$ total options. It is possible to always have exactly $n$ valid answers. If $T_n \le K < T_{n+1}$, we build our test with $n$ valid answers 1 through $n$ and also add $K - T_n$ occurrences of answer $n+1$, which is invalid.
\end{proof}

\subsection{Omni-Valid Tests}

We define a natural test to be \textit{omni-valid} if all of its answers are valid. In other words, for every value $a$ appearing in the test, we have $a=m_a$. Let us call a cost $C$ \textit{omni-solvable} if it is the cost for an omni-valid test.

Then an omni-solvable cost $C$ must be of the form $a_1^2+a_2^2+\dots + a_n^2$ with distinct $a_i$. In other words, cost $C$ must be representable as the sum of distinct squares. It is already well known that the exceptional costs $C$ that are not omni-solvable form a finite set listed in sequence A001422 in the OEIS \cite{OEIS}. The largest number in this sequence is 128. Thus, we can state that for any $C>128$, the cost is omni-solvable. Omni-solvable costs are described by sequence A003995.

The sequence enumerating the number of omni-valid tests of cost $n$ is the same as sequence A033461: Number of partitions of $n$ into distinct squares.

\section{Percentage Tests}
\label{sec:perctests}

\subsection{Preliminary discussion}

Similarly to natural tests, for the percentage test, we call an answer $a_i$ \textit{valid} if and only if $a_i$ appears in $a_i$\% of answers. In other words,
\[a_i = \frac{100m_{a_i}}{K}.\]

\begin{example}
In Test 1 in the Introduction, there are no valid answers. In Test 2, there is one valid answer: 25\%. In Test 3, there are two valid answers: 25\% and 75\%.
\end{example}

Consider some answer $a$ in a natural test with $K$ total options. Then this would correspond to a $\frac{a}{K}$ chance of selecting this answer (if the answer is valid), or $\frac{100a}{K}\%$ chance. Thus, multiplying every element of a natural test by $\frac{100}{K}$ creates an equivalent percentage test if the resulting numbers are integers. Similarly, multiplying each element in the percentage test by $\frac{K}{100}$ yields an equivalent natural test, again, if the resulting numbers are integers.

\begin{example}
A percentage test with options 25\%, 25\%, 50\%, and 75\% is equivalent to a natural test with options $1$, $1$, $2$, and $3$.
\end{example}

These tests are \textit{equivalent} because $a$ will be valid in a natural test if and only if $\frac{100a}{K}$ is valid in the corresponding percentage test. Note that a percentage test with $100$ options is a natural test.

However, the problem with using this equivalency to switch from natural tests to percentage tests is that $\frac{100a}{K}$ may not be an integer. There are a few ways we could address this. Firstly, we could amend the problem statement for percentage tests to allow for non-integer probabilities. Secondly, we can restrict the number of options, $K$, to be a divisor of $100$. But this gives us a rather artificial constraint. 

Note that we also have a similar issue when switching from percentage tests to natural tests. However, valid percentage answers will correspond to integers when multiplying by $\frac{K}{100}$.

Percentage tests are more interesting when all probabilities are multiples of $\frac{100}{K}$, because other probabilities cannot be valid:

\begin{center}
\iqtest{25\%,50\%,75\%,25\%}
\end{center}

Another difference between the two tests concerns changing the number of options. Let us consider the following equivalent tests. 

The natural test with answers $1, 1, 2, 2$ is equivalent to the percentage test with answers $25\%, 25\%, 50\%, 50\%$. The answer $2$, equivalently $50\%$, is valid. If, let us say, we add another answer, $4$, to the natural test, the answer $2$ stays valid in the new natural test $1$, $1$, $2$, $2$, $4$. However, the corresponding percentage test is $20\%, 20\%, 40\%, 40\%, 80\%$: we cannot just add a number; we need to rescale it.

It is thus much easier to add answers without altering the validity in natural tests than in percentage tests.

\subsection{Valid answers}

We have the following validity criterion for a particular valid answer in the percentage test.

\begin{lemma}[Validity Criterion.]
\label{lem:cc}
Let $K$ be the number of choices on a percentage test. The valid answer $N$ must appear in exactly $\frac{NK}{100}$ slots, and therefore, $NK$ must be a multiple of 100.
\end{lemma}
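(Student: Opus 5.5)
The plan is to unwind the definition of validity for percentage tests given just above and read off the count directly. By definition, an answer $N$ (written $N\%$) is valid exactly when
\[N = \frac{100\, m_N}{K},\]
where $m_N$ is the number of slots in which $N$ appears and $K$ is the total number of options. The first step is to solve this equation for the multiplicity. Multiplying both sides by $K/100$ gives
\[m_N = \frac{NK}{100},\]
which is precisely the claim that $N$ appears in exactly $\frac{NK}{100}$ slots.

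The second step is the divisibility statement. The multiplicity $m_N$ is, by definition, the number of answer choices equal to $N$, so it is a nonnegative integer. Since $N$ is valid, it actually occurs, so $m_N$ is even a positive integer. Hence $\frac{NK}{100}$ is an integer, which means $100 \mid NK$.

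There is no real obstacle here; the statement is essentially a rearrangement of the definition. The only point needing care is to state explicitly that $m_N$ is an integer because it counts slots. That observation is what turns the identity into a divisibility condition. It also justifies the earlier remark that valid percentage answers always correspond to integers under the rescaling by $\frac{K}{100}$.
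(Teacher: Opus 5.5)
Your proposal is correct and follows the paper's proof: both rearrange the validity condition $N/100 = m_N/K$ to get $m_N = NK/100$. You also state explicitly that $m_N$ counts slots and is therefore an integer, which gives $100 \mid NK$; the paper leaves that step implicit.
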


\begin{proof}
Suppose the valid answer $N$ appears in $s$ slots. By definition, the validity of $N$ means that $\frac{N}{100} = \frac{s}{K}$, implying the criterion.
\end{proof}

\begin{proposition}
\label{prop:maximumnumberofvalidanswers13}
The maximum number of valid answers in a percentage test is $13$.
\end{proposition}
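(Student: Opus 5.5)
The plan is to translate validity into a counting constraint on options, using the Validity Criterion (Lemma~\ref{lem:cc}), and then mimic the triangular-number argument used for natural tests with a fixed number of options.

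\textbf{Upper bound.} Suppose a percentage test with $K$ options has distinct valid answers $N_1<N_2<\dots<N_k$. By Lemma~\ref{lem:cc}, each $N_i$ occupies exactly $\frac{N_iK}{100}$ slots. These slot sets are disjoint because the values are distinct. Summing the slot counts gives
\[\frac{K}{100}\sum_{i=1}^k N_i \le K,\qquad\text{so}\qquad \sum_{i=1}^k N_i\le 100.\]
The $N_i$ are distinct positive integers, so $\sum N_i\ge 1+2+\dots+k=T_k$. Since $T_{14}=105>100$, we get $k\le 13$. This is the same reasoning as in the proposition on the maximum number of valid answers per number of options, with $K$ replaced by the normalized total $100$.

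\textbf{Construction.} To attain $13$, I would take $K=100$. Then $NK$ is divisible by $100$ for every $N$, so any integer percentage can in principle be valid, and by the remark in the preliminary discussion the test is simply a natural test with $100$ options. Include each value $N=1,\dots,13$ exactly $N$ times; this uses $T_{13}=91$ slots. Fill the remaining $9$ slots with an invalid value, for instance nine copies of $14$. The value $14$ would need $14$ copies to be valid, so it is invalid, and each of $1,\dots,13$ stays valid. The resulting test has exactly $13$ valid answers.

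\textbf{Main obstacle.} The only delicate point is to use the disjointness of the slot sets correctly when summing, and to make sure the filler value does not accidentally become valid or change the multiplicity of an existing valid value. Choosing a filler value different from $1,\dots,13$ and whose multiplicity differs from the value itself handles both concerns. The cost plays no role in the argument, so no further case analysis should be needed.
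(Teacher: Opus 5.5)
Your proof is correct and takes essentially the same route as the paper: the frequencies of the distinct valid answers must sum to at most $100\%$, so $14$ valid answers are impossible because $1+2+\dots+14=105>100$, and the test with $100$ options in which $i\%$ appears $i$ times for $1\le i\le 13$, plus nine copies of $14\%$, attains $13$. Your version only makes explicit the disjointness of the slot sets and the resulting inequality $\sum N_i\le 100$, which the paper leaves implicit.
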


\begin{proof}
Suppose there are $14$ valid answers. At a minimum, they cover frequency values from 1\% to 14\%. The total frequencies exceed 100\% of the answer slots, creating a contradiction.

We now show that $13$ is achievable. Consider a percentage test where each value $i\%$, where $1 \leq i \leq 13$, appears exactly $i$ times. Then, we can set the remaining $9$ options equal to $14$\%. Note that there are a total of $\frac{13 \cdot 14}{2} + 9 = 91 + 9 = 100$ answers. Then, each $1 \leq i \leq 13$ appears exactly $\frac{i \cdot 100}{100} = i$ times. So, all $1 \leq i \leq 13$ are the valid answers. Also note that $14$ is not a valid answer since it appears $9$ times instead of the required $14$ times. Thus, $13$ valid answers are achievable, so this is the maximum number of valid answers.
\end{proof}

Similar to natural tests, we call a percentage test \textit{unsolvable} if there are no valid answers. Continuing the similarity, we can define \textit{solvable}, \textit{monosolvable}, and \textit{polysolvable} percentage tests.

\begin{example}
For the percentage test in the original problem: Test $1$ is unsolvable. Test $2$ is monosolvable. Test $3$ is polysolvable.
\end{example}

\subsection{Mitosis}

\begin{example}
Consider the percentage Test 2, with answers 25\%, 10\%, 10\%, 5\%. It is a monosolvable test with valid answer 25\%. Consider a new percentage test, in which we repeat each answer from Test 2 twice: 25\%, 10\%, 10\%, 5\%, 25\%, 10\%, 10\%, 5\%. This is also a monosolvable test with the valid answer $25$\%.
\end{example}

The example above motivates a new definition.

\begin{definition}
    Consider a percentage test with answers $a_1, a_2, \ldots, a_K$. We define its \textit{$b$-polar mitosis} as a new test with $bK$ answers, where each $a_i$ is repeated $b$ times. Sometimes, we call it a \textit{mitosis} if we do not need to specify $b$.
\end{definition}

We can give a related example for natural tests.

\begin{example}
Consider a natural test with options $2$, $2$, $3$, and $3$. It is a monosolvable test with the valid answer $2$. Consider a new test, where we multiply each option by $2$ and repeat twice: $4$, $4$, $4$, $4$, $6$, $6$, $6$, and $6$. This is also a monosolvable test with the valid answer $4$.
\end{example}

\begin{definition}
    Consider a natural test with answers $a_1, a_2, \ldots, a_K$. We define its \textit{$b$-polar mitosis} as a new test with $bK$ answers, where each $a_i$ is repeated $b$ times and is multiplied by $b$. Sometimes, we call it a \textit{mitosis} if we do not need to specify $b$.
\end{definition}

In terms of Young diagrams, a $b$-polar mitosis can be described as replacing each cell in the original diagram with a $b$-by-$b$ square of cells.

\begin{theorem}
Given a natural or percentage test $T$, its mitosis maps valid answers to valid answers and invalid answers to invalid answers.
\end{theorem}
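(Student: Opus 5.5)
The plan is to track multiplicities directly under the map $a\mapsto ba$ (natural case) or $a\mapsto a$ (percentage case) and compare the validity conditions before and after. I will treat the two kinds of tests separately.

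For a natural test $T$ with multiplicities $m_a$, the $b$-polar mitosis $T'$ has values $ba$, where $a$ ranges over the values of $T$. Since $a\mapsto ba$ is injective, the multiplicity of $ba$ in $T'$ is $m'_{ba}=b\,m_a$. The value $ba$ is valid in $T'$ if and only if $ba=b\,m_a$, and since $b\ge 1$ this holds if and only if $a=m_a$, that is, if and only if $a$ is valid in $T$. Hence valid answers go to valid answers and invalid answers to invalid answers. I will also remark that every value of $T'$ is of the form $ba$, so $T'$ has no extra answers whose validity needs checking. Equivalently, in the Young diagram picture, a maximal rectangle of rows of length $a$ and height $m_a$ becomes an $ba\times bm_a$ rectangle. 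This is a square exactly when the original rectangle was.

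For a percentage test with $K$ options, the $b$-polar mitosis keeps every value $a$ and multiplies its multiplicity by $b$, so $m'_a=b\,m_a$, and the number of options becomes $bK$. The validity condition of the new test is $a=\frac{100\,m'_a}{bK}=\frac{100\,m_a}{K}$. This is exactly the original condition, so validity is preserved in both directions.

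I do not expect a real obstacle. The only points needing care are that distinct values remain distinct after mitosis (injectivity of multiplication by $b$, or the identity map in the percentage case) and that no new values are introduced, so the multiplicities are computed correctly. I will state both explicitly before the main computation.
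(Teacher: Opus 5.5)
Your proposal is correct and follows the paper's approach: in both cases each (possibly rescaled) value has its multiplicity multiplied by $b$, so the validity condition ($a=m_a$, or $a=100m_a/K$ with $K$ becoming $bK$) is unchanged. You also write out the natural-test case, including the injectivity of $a\mapsto ba$, which the paper dismisses as ``a similar argument.''
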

\begin{proof}
    For a percentage test, let $x$ be the value of one of the valid answers that appears $m$ times in test $T$. Then $x$ appears $bm$ times in $b$-polar mitosis test $T'$. Thus, the proportion of values of $x$ stays the same, so $x$ is still a valid answer. Similarly, an invalid answer stays invalid.

    A similar argument works for a natural test.
\end{proof}

\begin{corollary}
    If there exists a percentage test with cost $C$ with a given set of answers, then for any $b \geq 1$, there is a percentage test with cost $Cb$ with the analogous properties. Similarly, if there exists a natural test with cost $C$ with a given set of answers, then for any $b \geq 1$, there is a natural test with cost $Cb^2$ with the analogous properties.
\end{corollary}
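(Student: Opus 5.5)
The plan is to obtain the corollary directly from the preceding theorem by applying a $b$-polar mitosis to the given test and then checking how the cost changes. The properties we care about are which answers are valid, how many distinct valid answers there are, and whether the test is solvable, monosolvable or $k$-solvable. By the theorem just proved, mitosis sends valid answers to valid answers and invalid answers to invalid answers. So the mitosis test $T'$ has valid answers in bijection with those of $T$, and all these properties carry over. It remains only to compute the cost of $T'$ in each model.

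For a percentage test with answers $a_1,\dots,a_K$ and cost $C=\sum a_i$, the $b$-polar mitosis repeats each $a_i$ exactly $b$ times without changing its value. Its cost is therefore $b\sum a_i = Cb$. Every valid value $x$ stays a valid value (the same percentage), so the valid answers themselves are unchanged, and the number of options becomes $bK$.

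For a natural test, the $b$-polar mitosis repeats each $a_i$ $b$ times and multiplies it by $b$. Its cost is therefore $\sum b\cdot b a_i = b^2 C$; equivalently, each cell of the Young diagram becomes a $b\times b$ square. Here a valid answer $a$ with $m_a=a$ becomes the value $ba$ with multiplicity $bm_a=ba$, so it is valid. An invalid value $a$ with $m_a\ne a$ becomes $ba$ with multiplicity $bm_a\ne ba$. Also, distinct values $a\neq a'$ map to distinct values $ba\neq ba'$, so no merging of values can create or destroy validity. This injectivity check is the only point needing care, and it also ensures that the number of distinct valid answers is preserved exactly.

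The main (minor) obstacle is stating precisely what ``analogous properties'' means: the set of valid answers is preserved (percentage case) or scaled by $b$ (natural case), and their count is preserved. With that settled, the corollary follows immediately. The case $b=1$ is the identity.
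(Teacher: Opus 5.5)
Your proposal is correct and matches the paper's implicit argument. The paper gives no separate proof; the corollary follows by applying a $b$-polar mitosis, invoking the preceding theorem that mitosis preserves validity and invalidity, and computing the new costs $bC$ and $b^2C$. Your explicit check that $a\mapsto ba$ is injective, so that $ba$ has multiplicity exactly $bm_a$ in the new natural test, supplies the detail the paper leaves as ``a similar argument works.''
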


\subsection{Existence of solvable tests}

Question 2 in the original puzzle asks for the cheapest solvable percentage test. We expand this question as follows. For which costs $C$ do the solvable percentage tests exist?

\begin{theorem}
\label{thm:percentageexistence}
For percentage tests of cost less than $19$, no solvable test exists. For any cost greater than or equal to $19$, we can have a monosolvable percentage test.
\end{theorem}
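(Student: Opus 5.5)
The plan is to prove a lower bound on the cost of any solvable percentage test using the Validity Criterion (Lemma~\ref{lem:cc}), and then give an explicit family of monosolvable tests for every cost $C \ge 19$.

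\emph{Lower bound.} Take a solvable percentage test with $K$ options and a valid answer $N$. By Lemma~\ref{lem:cc}, $N$ appears in exactly $s = NK/100 \ge 1$ slots, so $N = 100s/K$.

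\begin{itemize}
\item These $s$ slots contribute $Ns = 100s^2/K$ to the cost.
\item Each of the remaining $K-s$ slots contributes at least $1$.
\end{itemize}

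Hence
\[
C \;\ge\; \frac{100s^2}{K} + K - s .
\]
If $s = K$, every answer equals $100$ and $C = 100K \ge 100$. Otherwise, AM--GM gives $100s^2/K + K \ge 20s$, so $C \ge 19s \ge 19$. Therefore no solvable test has cost less than $19$.

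Equality holds only when $s=1$ and $K=10$, so $N=10$. This suggests the extremal test: $10\%$ once and $1\%$ nine times, of cost $19$.

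\emph{Construction for $C \ge 19$.} I will keep $K = 10$. In a test with $10$ options, a value $v$ can be valid only if $v = 10m$ and $v$ occurs exactly $m$ times. This follows from Lemma~\ref{lem:cc}, since $10v$ must be a multiple of $100$. For $C \ge 19$, $C \ne 28$, take
\[
10,\ \underbrace{1,\dots,1}_{8},\ C-18 .
\]
Its cost is $C$, and $10$ occurs once, so it is valid. To see that no other value is valid:
\begin{itemize}
\item $1$ is not a multiple of $10$, so it is never valid.
\item $C-18 \ge 1$ occurs once, apart from the case $C-18=1$, where all nine small answers are $1$'s and it is still invalid.
\item A single occurrence could be valid only if $C-18 = 10$. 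In that case $10$ would occur twice, which destroys the valid answer.
\end{itemize}
So the only problem is $C = 28$. There I would use instead $10, 1,1,1,1,1,1,1, 2, 9$. Its cost is $10+7+2+9=28$, the value $10$ still occurs once, and none of $1,2,9$ is a multiple of $10$. Hence every test in this family is monosolvable.

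The main obstacle is the lower bound: one must handle correctly that $K$ is not fixed and that the valid answer's frequency is tied to $K$. The one-line AM--GM optimization over $K$ for fixed $s$, together with the separate all-valid case $s=K$, resolves it. After that, the construction only needs the single exceptional cost $C=28$ handled by hand.
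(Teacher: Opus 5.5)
Your proof is correct. The construction half coincides with the paper's: the same $K=10$ family consisting of $10$, eight $1$'s, and $C-18$. The only difference there is at $C=28$, where you stay at $K=10$ with $10$, seven $1$'s, $2$, $9$, while the paper switches to the four-option test $25,1,1,1$; either patch works.

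The impossibility half takes a genuinely different route. The paper uses the bounds $N\le C$ and $K\le C$ to get $NK\le 324$, invokes the Validity Criterion to force $100\mid NK$, and deduces $(N,K)=(10,10)$. It then counts $10+9\cdot 1=19$. This is a finite divisor check tailored to the threshold $18$, and it tacitly relies on $200$ and $300$ having no factorization into two factors both at most $18$. You instead write the contribution of the valid answer as $100s^2/K$ and apply AM--GM to obtain $C\ge 19s$ uniformly in $K$, with no enumeration of divisors.

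This buys a stronger statement than the theorem requires: every solvable percentage test costs at least $19$ times the multiplicity of its valid answer. Moreover, the equality case ($s=1$, $K=10$, all other answers equal to $1$) shows that $10$ together with nine $1$'s is the unique solvable test of cost $19$. This matches the initial term $1$ of the paper's sequence $s_{>0}^{\%}(n)$.

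Your separate treatment of $s=K$ is harmless but unnecessary, since the AM--GM bound already covers that case.
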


\begin{proof}
Assume there exists a solvable percentage test with $K$ options and cost $C \le 18$, where one of the valid values is $N$. Since all answers are positive integers, we have $K\le C$ and $N\le C$, hence $NK\le 18^2=324$.
By Lemma~\ref{lem:cc}, the number $NK$ must be a multiple of $100$, so, given our bounds, we have $NK=100$ and $(N,K)=(10,10)$.
If $10\%$ is valid with $K=10$, it must occur exactly once. All other $9$ answers must be positive and different from $10$, so the cost is at least $10+9\cdot 1=19$,
contradicting $C\le 18$.

For existence of a test when $C\ge 19$ and $C\neq 28$, take $K=10$ and the test
\[10,\ 1,\ 1,\ 1,\ 1,\ 1,\ 1,\ 1,\ 1,\ C-18.\]
Then $10\%$ occurs once and is valid, while all other values are not valid.

For $C=28$, the test \[25,\ 1,\ 1,\ 1.\]
has $K=4$ and $25\%$ is valid.
\end{proof}

Let $s^\%(n,k)$ denote the number of $k$-solvable percentage tests with cost $n$.

We wrote a program to calculate the number of solvable percentage tests with a cost of $C$. We denote this sequence as $s_{>0}^\%(n)$. The number of solvable percentage tests given the cost is now sequence A392924 in the OEIS \cite{OEIS}. We wrote a program to calculate the terms of this sequence. The first nonzero term occurs at index $19$, as we saw in Theorem~\ref{thm:percentageexistence}:
\[1,\ 1,\ 2,\ 3,\ 5,\ 9,\ 13,\ 19,\ 28,\ 40,\ 54,\ 74,\ 98,\ 129,\ 167,\ 215,\ \ldots.\]

\subsection{Minimum Cost of \texorpdfstring{$k$}{k}-solvable Tests}

A cost of $42$ is achievable for a polysolvable test, as we can see in the next example.

\begin{example}
\label{ex:minimumksolvablepercentage}
Consider a percentage test with $20$ options, where $17$ options are $1\%$, 1 option is $5\%$, and two options are $10\%$. The cost of the test is $42$, and options $5\%$ and $10\%$ are both valid.

Consider a percentage test with $25$ options, where $22$ options are $1\%$, 1 option is $4\%$, and two options are $8\%$. The cost of the test is $42$, and options $4\%$ and $8\%$ are both valid.
\end{example}

\begin{proposition}
    The minimum cost for a percentage test that is polysolvable is $42$.   
\end{proposition}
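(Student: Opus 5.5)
The lower bound comes from Lemma~\ref{lem:cc}: I will parametrize every pair of valid answers by the divisors of $100$, and then check a finite list of divisors. The examples preceding the proposition already give two polysolvable tests of cost $42$, so only the bound $C\ge 42$ needs proof.

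\textbf{Setup.} Take a polysolvable percentage test with $K$ options and two distinct valid answers $N_1<N_2$. Let $g=\gcd(K,100)$, $u=100/g$ and $r=K/g$, so that $K=rg$.
\begin{itemize}
\item By Lemma~\ref{lem:cc}, $N_iK/100=N_ir/u$ is an integer. Since $\gcd(r,u)=1$, this forces $u\mid N_i$.
\item Write $N_i=ut_i$ with integers $1\le t_1<t_2$. Then $N_i$ occupies exactly $s_i=t_ir$ slots.
\item Every other answer is a positive integer, so it contributes at least $1$.
\end{itemize}
Summing these contributions gives
\[
C\;\ge\; s_1N_1+s_2N_2+(K-s_1-s_2)\;=\; r\bigl(ut_1^2+ut_2^2+g-t_1-t_2\bigr).
\]
If the test has further valid answers, they only add to the cost, so restricting to two of them is harmless.

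\textbf{Reduction to $t_1=1$, $t_2=2$.} The map $t\mapsto ut^2-t$ is increasing on positive integers whenever $u\ge 1$. Hence the right-hand side is smallest at $t_1=1$, $t_2=2$, and also at $r=1$. This yields
\[
C\;\ge\; 5u+g-3=\frac{500}{g}+g-3 .
\]

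\textbf{Checking the divisors.} Evaluating $\frac{500}{g}+g-3$ over the nine divisors $g$ of $100$ gives:
\begin{itemize}
\item $g=1,2,4,5$: the values $498$, $249$, $126$, $102$;
\item $g=10$: the value $57$;
\item $g=20$ and $g=25$: the value $42$ in both cases;
\item $g=50,100$: the values $57$ and $102$.
\end{itemize}
The minimum is $42$, attained at $g=20$ and $g=25$. These match the two tests in Example~\ref{ex:minimumksolvablepercentage}, which confirms that the bound is sharp.

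\textbf{Main obstacle.} The delicate point is the $(K-s_1-s_2)$ term, because the number of remaining slots could a priori be negative. This can only happen when $g<3$. In that case the term $-(t_1+t_2)r$ must not be used. Instead, bound directly $C\ge s_1N_1+s_2N_2\ge 5ru\ge 5\cdot 50=250>42$. With this case handled, the argument is complete.
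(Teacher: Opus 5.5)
Your proof is correct, and it is organized differently from the paper's. The paper splits on $\gcd(K,100)\le 10$: then $x_1\ge 10$, and $x_2\ge 20$ occurs at least twice, so the cost is at least $50$. It then tacitly uses $K\le C\le 41$ to reduce the remaining cases to $K\in\{20,25,40\}$, and settles those by assertion or a manual search. You instead read off from Lemma~\ref{lem:cc} the exact structure $N_i=ut_i$, $s_i=rt_i$. From this you get one closed-form bound, $C\ge r\bigl(ut_1^2+ut_2^2+g-t_1-t_2\bigr)\ge 500/g+g-3$, valid for every $K$. This gives a complete argument with no appeal to $K\le C$ and no unchecked cases. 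For instance, $K=40$ has $g=20$ and $r=2$, giving $C\ge 84$. It also explains why the extremal tests are exactly those of Example~\ref{ex:minimumksolvablepercentage}: $r=1$, $(t_1,t_2)=(1,2)$, $g\in\{20,25\}$. The paper's version is shorter but leaves its finite checks to the reader.

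One correction to your last paragraph: the ``obstacle'' is not real. In an actual test $s_1+s_2\le K$ automatically, so $C\ge s_1N_1+s_2N_2+(K-s_1-s_2)$ always holds. Minimizing its right-hand side over a larger parameter set can only weaken the bound, never invalidate it. In fact, for $g\le 2$ one has $K-s_1-s_2=r(g-t_1-t_2)<0$, so no polysolvable test with such $K$ exists at all. Your fallback estimate is correct but unnecessary.
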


\begin{proof}
Assume $x_1 < x_2$ are both valid answers. Then, $x_1$ needs to appear $\frac{Kx_1}{100}$ times and $x_2$ needs to appear $\frac{Kx_2}{100}$ times, where both $Kx_1$ and $Kx_2$ are multiples of $100$.

Suppose $\gcd(K,100) \leq 10$, then $x_1 \geq 10$. As $x_2$ is different from $x_1$, we have $\frac{Kx_2}{100} \ge 2$, and $x_2 \geq 20$. Moreover, $x_2$ needs to appear at least twice. Therefore, the cost is at least 50, which is not the minimum according to Example~\ref{ex:minimumksolvablepercentage}. If $K = 40$, we cannot make two distinct valid answers keeping the cost below $42$. What remains are cases $K=20$ and $K=25$. We can manually find the minimum cost, which is presented in Example~\ref{ex:minimumksolvablepercentage}.
\end{proof}

We wrote a program to find the minimum cost for a $k$-solvable percentage test. 

The minimizing $k$-solvable tests for $k=3$ through $k=6$ have $50$ options, with valid answers being the even percentages from $2$ through $2k$, and the leftover answers being $1$.


The minimizing $k$-solvable tests for $k=7$ through $k=12$ have $100$ options with valid answers $2$ through $k+1$ percentages, and the leftover answers are $1$. For $k = 13$, the minimizing test is described in Proposition~\ref{prop:maximumnumberofvalidanswers13} and has valid answers $1$ through $13$, with the leftover options equal to $14$.

The resulting sequence is now sequence A396074 in the OEIS \cite{OEIS}:
\[19,\ 42,\ 72,\ 100,\ 145,\ 211,\ 268,\ 340,\ 430,\ 540,\ 672,\ 828,\ 945.\]


\subsection{Question 1}

Let us return to Question 1 and its generalizations: \textit{Suppose you wish to purchase a multiple-choice IQ test with cost $C$ and a correct answer. What is the maximum number of answers the test may have? What is the correct answer for that test?}

To maximize the total number of answers for a fixed total cost of $C$, we must minimize the average cost of each individual answer. Intuitively, this means that we should have the smallest possible valid answer, and other answers equal to $1$.

\begin{lemma}
For a test with cost $C$ and a valid answer, the maximality of the number of options $K$ implies that the valid answer is not $1$.
\end{lemma}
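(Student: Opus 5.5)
The plan is to bound the number of options $K$ from above in any test whose valid answer is $1\%$, and then exhibit, for the same cost, a test with a different valid answer and strictly more options.

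\textbf{Step 1 (upper bound when $1\%$ is valid).} Suppose $1\%$ is a valid answer. By Lemma~\ref{lem:cc}, $100 \mid K$, and the value $1$ occurs exactly $K/100$ times. Every other option must then be at least $2$, since another $1$ would change the multiplicity of $1$. Hence
\[
C \ \ge\ \frac{K}{100} + 2\left(K-\frac{K}{100}\right)=\frac{199K}{100},
\qquad\text{so}\qquad K\le \frac{100C}{199}.
\]
Since $K\ge 100$, this also forces $C\ge 199$. So only costs $C\ge 199$ need to be considered.

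\textbf{Step 2 (competing test with valid answer $2\%$).} Fix $C\ge 199$ and choose $t=\lfloor (C-3)/51\rfloor\ge 3$. Build a test with $K=50t$ options:
\begin{itemize}
\item the value $2$ appears $t$ times;
\item the value $1$ appears $49t-1$ times;
\item one further option has value $v=C-51t+1$.
\end{itemize}
The cost is $2t+(49t-1)+v=C$. The choice of $t$ gives $v\ge 4$, so $v$ is distinct from $1$ and $2$ and the multiplicities above are correct. Now check validity:
\begin{itemize}
\item $2\%$ is valid, because it appears $2K/100=t$ times.
\item $1\%$ is not valid, because $49t-1\ne t/2$.
\item $v\%$ is not valid, because it appears once, which would require $v=100/K=2/t<1$.
\end{itemize}
So this is a solvable test whose valid answer is not $1$.

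\textbf{Step 3 (comparison).} Using $t>(C-3)/51-1$, we get $K=50t>50(C-54)/51$. It remains to check that this exceeds $100C/199$, which holds whenever $C$ is a little above $110$, and in particular for all $C\ge 199$. Therefore, whenever a test with valid answer $1\%$ exists, some other solvable test of the same cost has strictly more options. Consequently, a solvable test with the maximum number of options cannot have $1$ as a valid answer.

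The main obstacle is Step~2: the padding must absorb an arbitrary cost without accidentally creating or destroying validity. Concentrating all the slack in a single large answer $v\ge 4$ handles this. If the rounding in Step~3 turned out to be tight, I would instead finish the few borderline costs by direct check.
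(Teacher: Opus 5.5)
Your proof is correct. It uses the same two ingredients as the paper:
\begin{itemize}
\item the lower bound $C \ge 199K/100$ for any test in which $1\%$ is valid;
\item a competing test in which $2\%$ is valid, padded with $1$'s and one large absorbing answer.
\end{itemize}

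The difference is in how the comparison is organized. The paper works test by test. From a test with $K=100b$ options and $1\%$ valid (so $C\ge 199b$), it builds a test of the same cost with $150b$ options, $3b$ of which equal $2$. The gain $150b>100b$ is then immediate and needs no numerical threshold.

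You instead compare two bounds that depend only on $C$: the upper bound $K\le 100C/199$ against your construction with $50\lfloor (C-3)/51\rfloor > 50(C-54)/51$ options. This forces the arithmetic check $C>110.78$, which is harmless because Step 1 already forces $C\ge 199$.

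In exchange, your version gains two things:
\begin{itemize}
\item It verifies explicitly that the padding neither creates nor destroys validity. You show $v\ge 4$, that $1\%$ has the wrong multiplicity, and that $v\%$ would need $v=2/t<1$. The paper only asserts that this can be arranged.
\item Your construction is essentially the one the paper uses later in the maximum-options theorem. So you get the sharper fact that valid answer $2$ yields roughly $50C/51$ options, versus at most $100C/199$ when $1$ is valid, rather than just a factor-$3/2$ improvement over each individual test.
\end{itemize}
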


\begin{proof}
If $1$ is valid, then $m_1=\frac{K}{100}$, so $K$ must be a multiple of $100$. Let $K = 100b$. We have $m_1 = b$, and the remaining $99b$ options must be at least $2$, so the cost is at least $b + 198b = 199b$.

Now we construct a test with the same cost, where $2$ is a valid answer with $150b$ options. We use $3b$ options for answer $2$, we are left with at least $193b$ cost to distribute among $147b$ options. We can do this by assigning a cost of $1$ to most of the options, and the rest to guarantee that we do not introduce more valid answers. As we build a test with more options and the same cost, the valid answer $2$ always produces more options than the valid answer $1$.
\end{proof}

From now on, we assume that in the solvable test maximizing the number of options, there exists a valid answer that is not $1$.

\begin{lemma}
The minimum cost of a percentage test with a valid answer $2\le a\le 100$ is
\[
\frac{a^2-a+100}{\gcd(a,100)}.
\]
\end{lemma}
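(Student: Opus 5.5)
The plan is to parametrize by the number of options $K$ and reduce the cost bound to a single divisibility parameter. Write $g=\gcd(a,100)$. By Lemma~\ref{lem:cc}, if $a\%$ is valid in a test with $K$ options, then $aK$ is a multiple of $100$. Dividing by $g$, this says $\frac{a}{g}K$ is a multiple of $\frac{100}{g}$. Since $\gcd\!\left(\frac{a}{g},\frac{100}{g}\right)=1$, this holds if and only if $\frac{100}{g}\mid K$. So I will write $K=t\cdot\frac{100}{g}$ with $t\ge 1$ an integer. The valid answer $a$ then appears exactly
\[
m=\frac{aK}{100}=t\cdot\frac{a}{g}
\]
times.

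For the lower bound, the $m$ copies of $a$ contribute $ma$ to the cost. Each of the remaining $K-m$ answers is a positive integer, so it contributes at least $1$. This gives
\[
C\ \ge\ ma+(K-m)=t\left(\frac{a^2}{g}+\frac{100-a}{g}\right)=t\cdot\frac{a^2-a+100}{g}.
\]
Here $K-m=t\frac{100-a}{g}\ge 0$ because $a\le 100$. The right-hand side is increasing in $t$, so every test in which $a$ is valid has cost at least $\frac{a^2-a+100}{g}$.

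For the matching construction, take $t=1$. The test has $K=\frac{100}{g}$ options: the value $a\%$ appears $\frac{a}{g}$ times, and $1\%$ fills the remaining $\frac{100-a}{g}$ slots. Its cost is exactly $\frac{a^2-a+100}{g}$. I need to check that $a$ is indeed valid here. Its frequency is $\frac{a/g}{100/g}=\frac{a}{100}$, as required. The filler value $1$ differs from $a$ because $a\ge 2$, so adding the ones does not change the multiplicity of $a$. The statement only asks for a test in which $a$ is a valid answer, so it does not matter whether $1$ happens to become valid as well. For example, with $a=99$ we get $K=100$ and a single $1$, and then $1$ is also valid.

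The edge case $a=100$ gives $K=1$ and a test consisting of the single answer $100\%$, with cost $100$, which matches the formula. I expect no serious obstacle in this argument. The one point needing care is the divisibility step $\frac{100}{g}\mid K$, because that is what forces the minimum to occur at $t=1$ and yields the factor $\frac{1}{g}$.
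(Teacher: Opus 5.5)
Your proof is correct and takes essentially the same approach as the paper: use the validity criterion to force $\frac{100}{g}\mid K$ with $g=\gcd(a,100)$, then fill every other slot with $1$. Your version is somewhat more careful than the paper's. Writing $K=t\cdot\frac{100}{g}$ and bounding the cost below by $t\cdot\frac{a^2-a+100}{g}$ justifies that the minimum occurs at the smallest admissible $K$, which the paper takes for granted, and you also explicitly check that the construction keeps $a$ valid.
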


\begin{proof}
Let $g=\gcd(a,100)$. If $a$ is valid, then
\[
m_a=\frac{aK}{100},
\]
so $K$ must be a multiple of $\frac{100}{g}$. Thus the smallest possible number of options is $\frac{100}{g}$, and then $a$ appears $\frac{a}{g}$ times.

The cheapest such test is obtained by making all other answers equal to $1$. Hence, the cost is at least
\[
a\cdot\frac{a}{g}+
\left(\frac{100}{g}-\frac{a}{g}\right)
=
\frac{a^2-a+100}{g}.
\]
\end{proof}

\begin{example}
If $a=2$, then $\gcd(2,100)=2$, so the minimum cost is
\[
\frac{2^2-2+100}{2}=51.
\]
This is achieved by a test with $50$ options: one answer equal to $2$ and the remaining $49$ answers equal to $1$.
\end{example}

\begin{lemma}
\label{lem:maxoptions}
The maximum number of options for a solvable percentage test with cost $C$ and valid answer $a\neq 1$ is not greater than
\[
\frac{100C}{a^2-a+100}.
\]
\end{lemma}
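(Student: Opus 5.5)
We prove the bound by a direct cost estimate, in the same spirit as the previous lemma but keeping $K$ general instead of taking its smallest admissible value. Fix a solvable percentage test of cost $C$ with $K$ options, and let $a\neq 1$ be a valid answer. Since every answer is at most $C$, we may assume $2\le a\le 100$. Indeed, a valid percentage $a$ satisfies $m_a = \frac{aK}{100}\le K$, which forces $a\le 100$.

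By Lemma~\ref{lem:cc}, the value $a$ appears exactly $m_a=\frac{aK}{100}$ times. These occurrences contribute $a\cdot\frac{aK}{100}$ to the cost. The remaining $K-\frac{aK}{100}$ options are positive integers, so each contributes at least $1$. Hence
\[
C \;\ge\; \frac{a^2K}{100} + K - \frac{aK}{100} \;=\; \frac{K\,(a^2-a+100)}{100}.
\]
Rearranging gives $K\le \frac{100C}{a^2-a+100}$. This holds for every solvable test with valid answer $a$, so it holds in particular for the maximum number of options.

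The only step needing care is the claim that the non-$a$ answers cost at least $1$ each. This is immediate from positivity, since validity of $a$ places no constraint on the other values. We do not need the gcd refinement from the previous lemma: the divisibility of $K$ by $\frac{100}{\gcd(a,100)}$ would only strengthen the bound, and the statement asks only for an upper bound. As a consistency check, taking $K=\frac{100}{\gcd(a,100)}$ with all other answers equal to $1$ gives equality, which recovers the minimum cost $\frac{a^2-a+100}{\gcd(a,100)}$ from the previous lemma.
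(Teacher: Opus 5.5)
Your proof is correct and is the same argument as the paper's. The $aK/100$ copies of $a$ contribute $a^2K/100$ to the cost, the remaining $K-aK/100$ answers contribute at least $1$ each, and rearranging $C\ge K(a^2-a+100)/100$ gives the bound.
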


\begin{proof}
If $a>1$ is valid, then
\[
m_a=\frac{aK}{100}.
\]
The copies of the valid answer contribute
\[
a m_a=\frac{a^2K}{100}
\]
to the cost. All remaining answers are at least $1$, so
\[
C \ge \frac{a^2K}{100}+\left(K-\frac{aK}{100}\right)
=K\cdot\frac{a^2-a+100}{100}.
\]
Therefore
\[
K\le \frac{100C}{a^2-a+100}.
\]
\end{proof}

\begin{example}
If $a=2$, then the number of choices has to be a multiple of $50$, and the lemma gives
\[
K\le \frac{100C}{102}=\frac{50C}{51}.
\]
\end{example}

\begin{theorem}
For $C\ge 1379$, the maximum number of options in a solvable percentage test of cost $C$ is
\[
\begin{cases}
50\left\lfloor \dfrac{C}{51}\right\rfloor, 
& C\not\equiv 1 \pmod{51},\\[1.2ex]
50\left(\left\lfloor \dfrac{C}{51}\right\rfloor-1\right),
& C\equiv 1 \pmod{51}.
\end{cases}
\]
\end{theorem}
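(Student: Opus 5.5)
We prove an upper bound on $K$ in terms of the valid answer $a$, and then give explicit constructions showing the bound is attained. Throughout, write $C=51q+r$ with $0\le r\le 50$, so $q=\lfloor C/51\rfloor$.

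\emph{Construction.} Take $K=50q$ and put $q$ copies of $2$, so that $2$ is valid by Lemma~\ref{lem:cc}. The remaining $49q$ options must have total $C-2q=49q+r$.
\begin{itemize}
\item If $r=0$, make them all $1$.
\item If $2\le r\le 50$, use $49q-1$ ones and a single answer $r+1\ge 3$.
\end{itemize}
The value $2$ is then not disturbed. The single answer $r+1$ cannot be valid, since that would need $(r+1)K/100=1$ with $K\ge 50$. The answer $1$ cannot be valid either, since $m_1\ge 49q-1\neq K/100=q/2$.

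If $r=1$, use $K=50(q-1)$ with $q-1$ copies of $2$. The remaining $49(q-1)$ options have total $49(q-1)+52$. Use ones and a single $53$. The same checks apply: $53$ occurs once, and $m_1=49q-50\ne (q-1)/2$ because $q$ is large.

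\emph{Upper bound for $a=2$.} Lemma~\ref{lem:cc} forces $50\mid K$. Lemma~\ref{lem:maxoptions} gives $K\le 50C/51$, hence $K\le 50q$.

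In the case $r=1$, suppose $K=50q$. Then the $49q$ non-$2$ options have total exactly $49q+1$ and none equals $2$, because $2$ already occurs exactly $q$ times. So they are $49q$ positive integers, none equal to $2$, summing to $49q+1$. This is impossible: at least one option exceeds $1$, hence is at least $3$, so the sum is at least $49q+2$. Therefore $K\le 50(q-1)$ whenever $2$ is valid and $r=1$.

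\emph{Upper bound for $a=1$.} By the lemma on the valid answer $1$, if $K=100b$ then $C\ge 199b$. So $K\le 100C/199$, which is far below $50(q-1)$ in our range.

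\emph{Upper bound for $a\ge 3$: the main obstacle.} Here Lemma~\ref{lem:maxoptions} combined with the divisibility $\tfrac{100}{\gcd(a,100)}\mid K$ gives
\[
K\le \frac{100}{g}\left\lfloor \frac{gC}{a^2-a+100}\right\rfloor, \qquad g=\gcd(a,100).
\]
The strongest competitors are $a=3$ (giving $K\le 100\lfloor C/106\rfloor$), then $a=4$ (giving $K\le 25\lfloor C/112\rfloor\cdot 4$, i.e.\ $K\le 100C/112$) and $a=5$. All others have ratio $(a^2-a+100)/100\ge 1.2$, while the target is about $0.98\,C$.

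A crude comparison $100C/106<50C/51-100$ only holds for $C>2703$. So the real work is the range $1379\le C\le 2703$. There I would compare the floor expressions directly. For instance, $50\lfloor C/51\rfloor-50 \ge 100\lfloor C/106\rfloor+1$ should follow by writing both sides in terms of $q$ and using that $100\lfloor C/106\rfloor$ is a multiple of $100$ lagging behind. Any residual small cases would be checked by a short finite computation, which is presumably where the threshold $1379$ comes from.

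Similarly, $a\ge 4$ is dispatched since $100C/112<50C/51-50$ already for $C\ge 1379$. Together with the constructions, this yields the stated maximum.
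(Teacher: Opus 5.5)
Your constructions, the $a=2$ upper bound (including the argument that $K=50q$ is impossible when $C\equiv 1 \pmod{51}$), and your bound for $a=1$ are correct and match the paper. The gap is in the case $a\ge 3$. You hand the range $1379\le C\le 2703$ to an unexecuted ``direct comparison of floor expressions'' plus a finite computation, so this case is not actually proved. Moreover, your sample inequality $50\lfloor C/51\rfloor-50\ge 100\lfloor C/106\rfloor+1$ is false if read for all $C$ in that range. For $1379\le C\le 1427$ the left side is $1300$ and the right side is $1301$. That inequality is true, and needed, only when $C\equiv 1 \pmod{51}$, and you never make that restriction.

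The missing idea is to treat the two residue classes separately rather than using the single crude lower bound $50C/51-100$. That bound charges both the worst-case floor loss (about $49$) and the $-50$ penalty at once. But the penalty occurs only when $C\equiv 1 \pmod{51}$, and then the floor loss is just $50/51$.

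Start from $K\le 50C/53$ for every $a\ge 3$, which is Lemma~\ref{lem:maxoptions} with $a^2-a+100\ge 106$; no divisibility refinement is needed.
\begin{itemize}
\item If $C\not\equiv 1 \pmod{51}$, the target satisfies $50\lfloor C/51\rfloor>50C/51-50>50C/53$ once $C\ge 1352$.
\item If $C\equiv 1 \pmod{51}$, then $\lfloor C/51\rfloor=(C-1)/51$ exactly. The target is therefore exactly $50(C-52)/51$, and $53(C-52)>51C$ holds iff $C>1378$.
\end{itemize}
This case split, not a computer check, is where the threshold $1379$ comes from. Indeed, at $C=1378=106\cdot 13=51\cdot 27+1$, the valid answer $3$ ties the valid answer $2$ at $K=1300$.

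Two smaller slips. For $a=4$ the divisibility bound is $25\lfloor C/28\rfloor$, not $25\lfloor C/112\rfloor\cdot 4$. Also, in the case $C\equiv1$ you should compare $100C/112$ with $50(C-52)/51$ rather than with $50C/51-50$. The conclusion survives, since $100C/112<50C/51-100$ already for $C>1143$.
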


\begin{proof}
Consider $C\not\equiv 1 \pmod{51}$. First, the given bound is attainable. Let
\[
q=\left\lfloor \frac{C}{51}\right\rfloor.
\]
Take a test with $50q$ options, of which $q$ are equal to $2$. Then $2$ appears in exactly $\frac{2}{100}\cdot 50q=q$ positions, so $2$ is valid. The remaining $49q$ options are initially set equal to $1$, giving a total cost of $51q$.

Since $C\not\equiv 1 \pmod{51}$, the excess $C-51q$ is either $0$ or at least $2$. If it is nonzero, add this excess to one of the entries equal to $1$. This way, $2$ remains valid, and no other valid answers are created. Hence, the claimed number of options is achievable.

If $C=51q+1$, then $K=50q$ is impossible. Indeed, after the $q$ copies of $2$, the remaining $49q$ answers must be positive. The only way to distribute the remaining cost is to have one more option of $2$, making $2$ invalid. However, $K=50(q-1)$ is attainable by using $q-1$ copies of $2$, making the other answers equal to $1$, and adding the remaining excess $52$ to one of the $1$'s. Moreover, if the valid answer is $2$, the number of options must be a multiple of $50$. Thus, in this case, the highest achievable number of options with the valid answer $2$ is $K=50(q-1)$.

Now let $a>1$ be any valid answer. For $a\ge3$, Lemma~\ref{lem:maxoptions} gives
\[
K \le \frac{100C}{106}=\frac{50C}{53}.
\]
In case $C\not\equiv 1 \pmod{51}$, our bound gives at least
\[
50\left\lfloor\frac{C}{51}\right\rfloor
\ge \frac{50C}{51}-50
\]
options. This exceeds $\frac{50C}{53}$ for $C\ge1352$.

In case $C\equiv 1 \pmod{51}$, our construction gives at least
\[
50\left(\frac{C-1}{51}-1\right)
=\frac{50(C-52)}{51}
\]
options. This exceeds $\frac{50C}{53}$ for $C\ge1379$. This proves the theorem.
\end{proof}

It remains only to check the finite range $1 \le C<1379$. A direct computation over all possible valid answers $a=1,\ldots,100$ shows that the same maximum holds for $C \ge 257$.

\begin{example}
Consider the test of cost $256$ with $9$ instances of $4$\%, $1$ instance of $5$\%, and $215$ instances of $1$\%. It has $225$ options. Option $4$\% appears with probability $4$\% and is valid. The achievable maximum number of options with valid answer $2$\% is $200$. Thus, for $C = 256$, the valid answer $4$\% corresponds to more options than the valid answer $2$\%.
\end{example}

Our program calculated the maximum number of options for a solvable percentage test of cost $C$. The sequence below starts at $C = 19$ and is not sequence A396792  in the OEIS \cite{OEIS}:
\[10,\ 10,\ 10,\ 10,\ 10,\ 20,\ 20,\ 20,\ 20,\ 25,\ 25,\ 25,\ 25,\ \ldots.\] 

The sequence is non-decreasing except for the drop at $C=52$.

\begin{example}
Consider $C = 51$. We can have the valid answer $2$ with $50$ options, where the remaining options are $1$s. This is the maximum possible number of options. 

Consider $C = 52$. If there are $50$ options, then options that are not equal to $1$\% are either $3$\% or two occurrences of $2$\%. In either case, the test is not solvable. However, we can have a solvable test with $40$ options, two of which are $5$\%, supplying the valid answer, and the remaining $38$ options sum to $42$.
\end{example}

Our code also outputs the valid answer for each cost in the previous sequence. In case of ties, our program outputs the smallest valid answer. Here is this sequence starting from index $19$:
\[10,\ 10,\ 10,\ 10,\ 10,\ 5,\ 5,\ 5,\ 5,\ 4,\ 4,\ 4,\ 4,\ \ldots.\]

Starting from $C \ge 257$, such a valid answer is $2$.

Note that there are many costs for which there are multiple tests that may have different valid answers. In that case, our program returns the smallest valid answer. The smallest cost with multiple possible optimal valid-answer values across different optimal tests is $39$.

\begin{example}
For $C=39$, the maximum possible number of options is $25$. One optimal test has one valid answer $4$\%, and the remaining cost of $35$ can be distributed among the remaining $24$ options. Another optimal test has two options with valid answer $8$\%, and the remaining $23$ options are equal to $1$\%.
\end{example}

Our sequences allow us to answer Question 1.

\begin{corollary}
    For a solvable percentage test of cost \$100, the maximum possible number of options is $80$, and the corresponding valid answer is $5$\%. 
\end{corollary}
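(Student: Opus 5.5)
The plan is to bound the number of options $K$ separately for each possible valid answer $a$, and then to exhibit a test with $80$ options. Fix a solvable percentage test of cost $C=100$ with $K$ options and a valid answer $a$.

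\textbf{Excluding $a=1$.} By the argument in the proof of the lemma showing that maximality excludes the valid answer $1$, such a test needs $K=100b$ options and cost at least $199b>100$. So $a=1$ is impossible outright, and we may take $a\ge 2$.

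\textbf{Two constraints on $K$.} For $a\ge 2$ I combine:
\begin{itemize}
\item Lemma~\ref{lem:maxoptions}, which gives $K\le \frac{10000}{a^2-a+100}$;
\item the Validity Criterion (Lemma~\ref{lem:cc}), which forces $K$ to be a multiple of $\frac{100}{\gcd(a,100)}$.
\end{itemize}

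\textbf{Case check over $a$.}
\begin{itemize}
\item $a=2$: $K\le 10000/102<99$ and $50\mid K$, so $K\le 50$.
\item $a=3$: $100\mid K$, but $K\le 10000/106<95$, so this case is impossible.
\item $a=4$: $25\mid K$ and $K\le 10000/112<90$, so $K\le 75$.
\item $a=5$: $20\mid K$ and $K\le 10000/120<84$, so $K\le 80$.
\item $a\ge 6$: $a^2-a+100\ge 130$, so $K\le 10000/130<77$.
\end{itemize}
Hence every solvable test of cost $100$ has $K\le 80$. Moreover, $K=80$ can occur only when the test has a valid answer equal to $5$.

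\textbf{Construction.} Take $K=80$, with the answer $5\%$ appearing $4$ times (cost $20$). The remaining $76$ options must sum to $80$. I choose $74$ copies of $1\%$ and $2$ copies of $3\%$, giving total cost $20+74+6=100$. Then:
\begin{itemize}
\item $5\%$ appears $4=\frac{5\cdot 80}{100}$ times, so it is valid;
\item $1\%$ would need $0.8$ occurrences, so it is invalid;
\item $3\%$ would need $2.4$ occurrences, so it is invalid.
\end{itemize}
The choice of filler is the only delicate step. The naive filler of $75$ ones and a single $5$ adds a fifth copy of $5\%$ and destroys its validity, so the leftover cost has to be placed on values whose required multiplicity $\frac{80v}{100}$ is not an integer.

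\textbf{Uniqueness of the valid answer.} It remains to check that every optimal test has $5\%$ as its only valid answer. If $K=80$ and $v$ is valid, then $80v/100$ must be an integer, so $5\mid v$. The case analysis above already rules out any valid answer $v\ge 6$ when $K=80$, since then $K<77$. Therefore $v=5$, and the corresponding valid answer is $5\%$ as claimed.
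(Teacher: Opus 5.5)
Your proof is correct. It takes a different route from the paper, which gives no hand argument for this corollary. The values $80$ and $5\%$ are read off the output of the authors' program (``Our sequences allow us to answer Question 1''), because their theorem on the maximum number of options covers only $C\ge 1379$ (extended by computation to $C\ge 257$), and there the optimal valid answer is always $2\%$.

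You instead give a self-contained argument. You use Lemma~\ref{lem:maxoptions} as that theorem does, but you sharpen it with the divisibility condition $\frac{100}{\gcd(a,100)}\mid K$ from Lemma~\ref{lem:cc}. The paper's uniform bound $K\le\frac{50C}{53}$ for $a\ge 3$ lacks this ingredient; at $C=100$ it only gives $K\le 94$, which beats the $a=2$ bound of $50$ and so settles nothing. With divisibility added, the problem reduces to your five-case check.

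Your route gives a proof that can be checked by hand, and a stronger conclusion: $5\%$ is the \emph{only} valid answer in \emph{every} $80$-option test of cost $100$. The program, by contrast, reports only the smallest valid answer in case of ties.

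The computational route works uniformly over all costs. That matters because the bound from Lemma~\ref{lem:maxoptions} plus divisibility need not be attained. At $C=52$ it permits $K=50$ with valid answer $2\%$, but the only possible filler adds a second copy of $2\%$ and destroys validity. So in general a matching construction has to be checked separately, as you correctly do here with $74$ copies of $1\%$ and $2$ copies of $3\%$.
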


\section*{Acknowledgments}

We are grateful to the PRIMES STEP program for giving us the opportunity to conduct this research. After we finished our paper, we used ChatGPT to check for mistakes in English, math, and LaTeX.


\begin{thebibliography}{9}

\bibitem{Andrews} George E.~Andrews, \textit{The Theory of Partitions}, Encyclopedia of Mathematics and its Applications, Vol.~2, Addison-Wesley, Reading, MA, 1976.

\bibitem{DKS2026}
I.~David, T.~Khovanova, and Y.~Shpilman,
\emph{Mathematical Puzzles and Curiosities},
World Scientific, 2026.

\bibitem{OEIS}
N.~J.~A.~Sloane, \emph{The On-Line Encyclopedia of Integer Sequences},
\url{https://oeis.org/}, accessed 2026.

\bibitem{Wilf}
Herbert S.~Wilf, \textit{generatingfunctionology}, 2nd ed., A K Peters, Wellesley, MA, 1994.

\end{thebibliography}
\end{document}